\renewcommand{\emph}[1]{\textit{#1}}
\newcommand{\gbsxx}[1]{\relax}
\definecolor{brown}{cmyk}{0, 0.72, 1, 0.45}
\definecolor{grey}{gray}{0.5}
\def\cT{\mathcal{T}}
\newcommand{\set}[1]{\left\{#1\right\}}
\newcommand{\card}[1]{\left|#1\right|}
\newcommand{\abs}[1]{\left|#1\right|}
\def\cQ{\mathcal{Q}}
\def\ii_(#1,#2){i_{#1}^{#2}}
\def\qs{q.s.}
\def\Qs{Q.s.}
\def\bA{\bar{A}}
\def\bB{\bar{B}}
\def\bC{\bar{C}}
\def\a{\alpha}
\def\b{\beta}
\def\d{\delta}
\def\e{\varepsilon}
\def\f{\phi}
\def\F{\Phi}
\def\z{\zeta}
\def\l{\lambda}
\def\p{\pi}
\def\r{\rho}
\def\s{\sigma}
\def\t{\tau}
\def\om{\omega}
\def\x{\xi}
\newcommand{\ceil}[1]{\left\lceil #1 \right\rceil}
\newcommand{\parens}[1]{\left( #1 \right)}
\newcommand{\floor}[1]{\left\lfloor #1 \right\rfloor}
\newcommand{\whp}{{w.h.p.}\xspace}
\newcommand{\proj}{\operatorname{proj}}
\newcommand{\triple}[1]{(#1,#1,#1)}
\def\cE{\mathcal{E}}
\newcommand{\OTR}[2]{((\SS{#1}{#2})^*,\SS{#1}{#2},X)}
\newcommand{\brac}[1]{\left( #1 \right)}
\newcommand{\expect}{\operatorname{\bf E}}
\def\E{\expect}
\renewcommand{\Pr}{\operatorname{\bf Pr}}
\newcommand\bfrac[2]{\left(\frac{#1}{#2}\right)}
\def\bC{\bar{C}}
\newtheorem{theorem}{Theorem}
\newtheorem{lemma}{Lemma}
\newtheorem{definition}[lemma]{Definition}
\newtheorem{property}[lemma]{Property}
\newtheorem{remthm}[lemma]{Remark}
\newtheorem{claim}[lemma]{Claim}
\newenvironment{remark}{\begin{remthm} }{\end{remthm}}%
\newcounter{thmtemp}
\newcommand{\nospace}[1]{}
\def\path{\operatorname{PATH}}
\def\BDTS{\texttt{BDAPTA}}
\newcommand{\BDTSx}[1]{\BDTS$(#1)$}
\def\expdist{\ensuremath{\operatorname{Exp}}}
\def\EX{\expdist(1)}
\newcommand{\Bin}{\ensuremath{\operatorname{Bin}}}
\def\V{{\bf Var}}
\newcommand{\beq}[1]{\begin{equation}\label{#1}}
\def\eeq{\end{equation}}
\newcommand{\ZPl}[1]{Z^{P}_{#1,n}}
\newcommand{\ZAx}[1]{Z^{A}_{#1,n}}
\newcommand{\Xijk}{X_{i,j,k}\:}
\newcommand{\Mijk}{M_{i,j,k}}
\newcommand{\CC}{M_{i,j,k}}
\renewcommand{\ss}[2]{s^{#1}_{#2}}
\renewcommand{\SS}[2]{{S^{#1}_{#2}}}
\newcommand{\SSstar}[2]{{(S^\star)}^{#1}_{#2}}
\newcommand{\Triples}{\textsc{Triples}\xspace}
\newcommand{\SetConstruct}{\textsc{SetConstruct}\xspace}
\newcommand{\PoisonPropagate}{\textsc{PoisonPropagate}\xspace}
\newcommand{\MakeTree}{\textsc{MakeTree}\xspace}
\newcommand{\GreedyPhase}{\textsc{GreedyPhase}\xspace}
\newcommand{\MainPhase}{\textsc{MainPhase}\xspace}
\newcommand{\FinalPhase}{\textsc{FinalPhase}\xspace}
\newcommand{\fail}{\textsc{Fail}\xspace}
\newcommand{\Pstar}{P^{\star}}
\newcommand{\Sstar}{S^{\star}}
\newcommand{\Jstar}{J^{\star}}
\newcommand{\Ustar}{U^{\star}}
\newcommand{\sz}{\sigma}
\newcommand{\xcard}{n'} 
\newcommand{\nt}{{19/20}} 
\newcommand{\xcardl}{n-2 n^{\nt}}
\newcommand{\mainalgsec}[1]{\medskip \noindent \textbf{#1}}
\newcommand{\outlist}{\operatorname{AugOut}}
\newcommand{\kpell}{{\dd+1-\ell}}
\newcommand{\rate}{{(1 \pm \d)}}
\newcommand{\ratr}{{(1 \pm \d)}}
\newcommand{\ratp}{{(1 + \delta)}}
\newcommand{\kp}{{\dd+1}}
\newcommand{\ellp}{{\ell+1}}
\newcommand{\argmax}{\operatorname{argmax}}
\newcommand{\sspar}[2]{ (\ss #1 #2, \ss{#1+1}{2 #2 -1}, \ss{#1+1}{2 #2}) }
\newcommand{\bitsi}[1]{ b_1 \ldots b_{#1}}
\newcommand{\emptystring}{\$\xspace}
\newcommand{\pii}[1]{\p(i_{#1})}
\newcommand{\piip}[1]{\p'(i_{#1})}
\newcommand{\ptripp}[3]{(\piip{#1}, \piip{#2}, \piip{#3})}
\def\DD{D}
\def\dd{d}
\def\ddp{{d+1}}
\def\pp{{\rho}}
\def\tk{\eta}
\newcommand{\sgeq}{\succsim}
\def\err{\delta}
\newcommand{\outtrip}[1]{ (#1^{\star}, #1, X) }
\newcommand{\outnode}{(\ss \ell m, \ss\ellp{2m-1}, \ss\ellp{2m})}
\newcommand{\xx}{{n}}
\newcommand{\outtripp}[2]{(\SSstar{#1}{#2}, \SS{#1}{#2}, X)}
\newlength{\mysep}
\newcommand{\oneto}[1]{\set{1,\ldots,#1}}
\newcommand{\linefrac}[2]{#1 / #2}
\newcommand{\four}{4\xspace}
\newcommand{\fourm}{3\xspace}
\newcommand{\fourp}{5\xspace}
\newcommand{\FAILIF}[1]{\STATE \textbf{if} #1 \textbf{then} \fail}
\newcommand{\LP}{{\operatorname{LP}}}
\newcommand{\kk}{k}
\begin{document}
\abovedisplayshortskip=5pt plus 3pt minus 4pt
\belowdisplayshortskip=5pt plus 3pt minus 4pt

\title
{\vspace*{-.5in} 
Efficient algorithms for three-dimensional \\ 
axial and planar
random assignment problems}

\author{Alan Frieze%
\thanks{Research supported by NSF grant CCF-1013110.
Department of Mathematical Sciences,
Carnegie Mellon University, Pittsburgh PA 15213, USA.
\hbox{e-mail}~{\small\texttt{alan@random.math.cmu.edu}}}
\and
Gregory B.~Sorkin\thanks{%
Department of Management, 
London School of Economics,
London WC2A 2AE, England.
\hbox{e-mail}~{\small\texttt{g.b.sorkin@lse.ac.uk}}
The work was done in part when the author was at the
Department of Mathematical Sciences,
IBM T.J.~Watson Research Center, Yorktown Heights NY 10598, USA.
}}

\maketitle
\markboth{}{}
\thispagestyle{empty}

\begin{abstract}
Beautiful formulas are known for the expected cost of
random two-dimensional assignment problems,
but in higher dimensions even the scaling is not known.
In three dimensions and above, the problem has natural ``Axial''
and ``Planar'' versions, both of which are NP-hard.
For 3-dimensional Axial random assignment instances of size $n$,
the cost scales as $\Omega(1/n)$,
and a main result of the present paper is
a linear-time algorithm that, with high probability, 
finds a solution of cost $O(n^{-1+o(1)})$.
For 3-dimensional Planar assignment,
the lower bound is $\Omega(n)$,
and we give a new efficient matching-based algorithm
that with high probability returns a solution with cost $O(n \log n)$.
\end{abstract}

\section{Introduction}
An instance of the (two-dimensional) assignment problem
may be thought of as an $n \times n$ cost array $M_{i,j}$,
a candidate solution is a permutation $\pi \colon [n] \mapsto [n]$,
its cost is
$\sum_{i=1}^n M_{i,\pi(i)}$,
and an optimal solution is one minimizing the cost.
If the cost matrix represents, for example,
the costs of assigning various jobs $i$ to machines $j$,
where each machine can accommodate only one job,
then the problem's solution represents the cheapest
way of assigning the jobs to machines.
It may equivalently be formulated as an integer linear program,
minimizing the sum of selected elements consistent with the selection
of exactly one element from each row and from each column, i.e.,
minimizing
$\sum_{i,j} M_{i,j}X_{i,j} $
where $X_{i,j} \in \set{0,1}$,
$(\forall i) \; \sum_{j} X_{i,j}=1$
and $(\forall j) \; \sum_{i} X_{i,j}=1$.
This is a network flow problem,
thus its linear relaxation with $X_{i,j} \in [0,1]$
has integer extreme points,
and the problem may be solved in polynomial time.

The \emph{random assignment problem}, in its most popular form,
is the case when 
the entries of the cost matrix $C$ are i.i.d.\ \EX\ random variables
(independent, identically distributed 
exponential random variables with parameter~1).
Since the problem can be solved in polynomial time, 
the focus for the random case is on
the cost's expectation as a function of $n$,
$$f(n) = \E\Big[ \min_{\pi} \sum_{i=1}^n M_{i,\pi(i)} \Big]
= \E \Big[ \min_{X_{i,j}} \sum_{i,j}  M_{i,j} X_{i,j}\Big] $$
with $X_{i,j} \in \set{0,1}$ subject to
the row and column constraints above.
This problem has received a great deal of study over several decades.
It was considered from an operations research perspective
in the 1960s \cite{Donath69},
an asymptotic conjecture $f(n) \rightarrow \pi^2/6= \zeta(2)$
was formulated by statistical physicists M\'ezard and Parisi in the 1980s
based on the mathematically sophisticated but non-rigorous ``replica method''
\cite{Mezard85,Mezard87},
an exact conjecture $f(n) = \sum_{i=1}^n 1/i^2$
was hazarded by Parisi in the late 1990s \cite{Parisi98}
a generalization
to partial matchings and non-square matrices
was made by Coppersmith and Sorkin \cite{CS99},
the M\'ezard--Parisi
conjecture was proved by Aldous in a pair of papers in 1992 and 2001
\cite{Aldous92,Aldous01},
and the Coppersmith--Sorkin conjecture was proved simultaneously in 2004
by two papers using two very different methods,
by Nair, Prabhakar and Sharma \cite{NaPrSh}, 
and Linusson and W\"astlund \cite{Wastlund04}.
A further generalisation of these conjectures was made 
by Buck, Chan and Robbins in 2002 \cite{BCR} 
and proved by W\"astlund in 2005 \cite{Wastlund05}.
The study of 
other aspects of the random assignment problem and 
related problems is ongoing, 
for example by W\"astlund in \cite{Wastlund09}.

In higher dimensions there are two natural generalizations
of the assignment problem.
For example in three dimensions,
the Axial assignment problem is,
given an $n \times n \times n$ matrix
(or ``tensor'' or ``array'') $C$,
to find a solution $\Xijk$
minimizing
$\sum_{i,j,k}  \Mijk \Xijk$
where $\Xijk \in \set{0,1}$
and there is one selected value per ``plane'' of the array, 
of which there are three types, 
1-, 2-, and 3-planes according to which coordinate is fixed:
\beq{AP}
(\forall i) \; \sum_{j,k} \Xijk=1 , \quad
  (\forall j) \; \sum_{i,k} \Xijk=1 , \quad
  (\forall k) \; \sum_{i,j} \Xijk=1 . 
\eeq
Equivalently, the Axial problem is to determine
$ \min_{\pi,\sigma} \sum_{i=1}^n M_{i,\pi(i),\sigma(i)} $,
the minimum taken over a pair of permutations.
The Planar three-dimensional assignment problem is similar but
with one selected value per ``line'' of the array,
with three types of lines:
\beq{PP}
(\forall i,j) \; \sum_{k} \Xijk=1 , \quad
  (\forall j,k) \; \sum_{i} \Xijk=1 , \quad
  (\forall i,k) \; \sum_{j} \Xijk=1 . 
\eeq
The generalizations to higher dimensions are clear.
In three dimensions and higher, the Axial and Planar assignment problems
are both NP-hard.
The Axial case in three dimensions was one of the original problems
listed by Karp \cite{Karp}.
The complexity of the Planar problem was established by Frieze \cite{Frieze3d}.

The \emph{multi-dimensional random assignment problem}
we consider here is the case when the entries of the cost matrix
are i.i.d.\ \EX\ random variables.
In this random setting, there are two natural questions.
First, are there polynomial-time algorithms that
find optimal or near-optimal solutions \whp?%
\footnote{A sequence of events 
$\cE_n,n\geq 0$ is said to occur \emph{with high probability} (\whp)
if $\lim_{n\to\infty}\Pr(\cE_n)=1$.
}
Second, what is the expected cost of a minimum assignment?
A random two-dimensional assignment instance has limiting expected cost
$\zeta(2)$,
and Frieze showed that
the expected cost of a minimum spanning tree in the complete graph with
random \EX\ edge weights tends to $\zeta(3)$ \cite{FriezeMST},
so it is tantalizing to wonder if there might be similarly beautiful
expressions for the expected cost in
multi-dimensional versions of the random assignment problem.
However, we do not even know how the cost scales with $n$.

Some of the characteristics and applications of these problems are discussed
in a recent book by Burkard, Dell'Amico and Martello \cite{BDM}.
Little
is known about the probabilistic behavior of these problems for dimension at least three,
and even less about polynomial-time algorithms
for constructing good solutions. First consider the Axial problem with
constraints \eqref{AP}.
Grundel, Krokhmal and Pardalos \cite{GKP} 
replace the \EX\ assumption with more general distributions.
Where $F(x)$ is the probability that $\Mijk \leq x$,
their result most relevant to our discussion is that if $F^{-1}(x)=O(x^\b)$ for some $\b>0$ as $x\to 0^+$
then the minimum value $\ZAx{d}\to 0$ \whp\
It is not difficult to prove $\ZAx{d}\to
0$ \whp\ for $\Mijk =\EX$
(see Remark~\ref{GreedyNote}),
but we are interested in the precise convergence rate.
Kravtsov \cite{Krav1} describes a class of greedy algorithms that work well 
if \EX\ is replaced by
uniform over $\set{1,2,\ldots,n^\a}$ where $\a<1$ depends on the particular algorithm.
The lower bound of 1 means that the minimum is at least $n$ and this is not a difficult target, asymptotically.

The Planar problem with constraints \eqref{PP} was considered by Dichkovskaya and Kravtsov \cite{DK}. Here they
discuss a ``greedy'' algorithm similar to that proposed by us. Their analysis is quite different and their
distribution is 
(as with \cite{Krav1}) 
uniform over $\set{1,2,\ldots,n^\a}$ where $\a<1$. This makes the minimum
at least $n^2$ and again this is not a difficult target, asymptotically.

Statistical physicists Martin, M\'ezard and Rivoire
conjecture in \cite{Mezard05}
that the Axial problem has an asymptotic expected cost of $c/n$,
based on ``cavity'' calculations; see also \cite{Mezard04}.
However, there is no nice constant like $\zeta(2)$ predicted,
and no certainty that the conjectured scaling is correct.

\section{Summary of results, methods, and limitations, and outline} \label{secsummary}
\subsection{Axial assignment}
For the Axial ${\DD}$-dimensional assignment problem,
there is an easy lower bound of $\Omega(1/n^{{\DD}-2})$ on the expected cost
(see Theorem~\ref{th1}).
Our main result (Theorem~\ref{th2}) is for the case ${\DD}=3$. 
Here we give an algorithm that
runs in time linear in $n$ 
and yields \whp\ a solution of cost $O(1/n^{1-o(1)})$, 
an $n^{o(1)}$ approximation to the best possible. 
Not only is this the first nearly tight upper bound obtained algorithmically,
it is the only good bound except for one (see Theorem~\ref{th1})
following from a recent non-constructive result on hypergraph factors
by Johansson, Kahn and Vu \cite{JKV}.

Our algorithm may be compared with one in
\cite{CS99} for 2-dimensional assignment.
There, a bipartite matching was augmented by an
alternating path of bounded length,
with care taken in regard to ``conditioning'' of the cost matrix.
Here, partial assignments are augmented with
a ``bounded-depth alternating-path tree'', 
a tree in which 
a newly added element displaces two previously selected elements,
those two elements are replaced in a way displacing four selected elements,
and so on, until all the displaced elements are replaced by
elements in a non-conflicting, ``unassigned'' set
(see Figure~\ref{figfig}).

\usetikzlibrary{decorations.pathmorphing}
\usetikzlibrary{positioning}
\usetikzlibrary{arrows,automata}

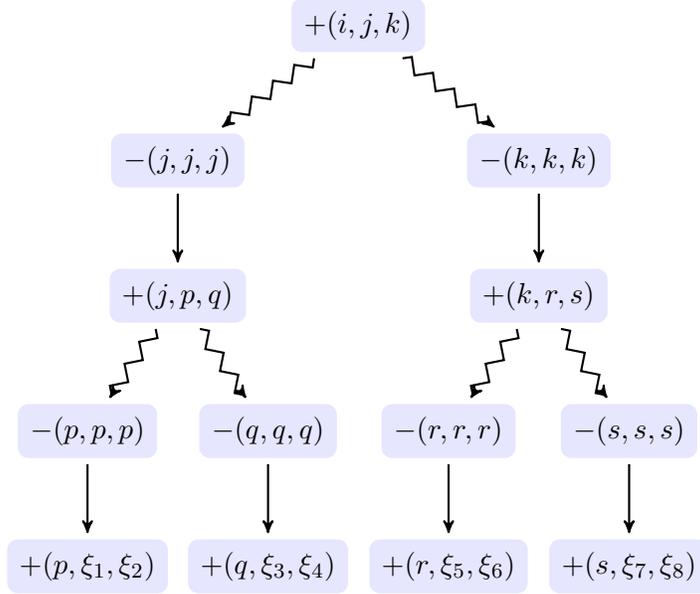
\begin{figure}[htb] 
\centering
  \begin{tikzpicture}[scale=0.6,thick]
   \tikzstyle{vertex}=
     [rounded corners,fill=blue!10,inner sep=1ex,outer sep=0.5ex]
   \tikzstyle{edge}=[->,>=stealth']
   \tikzstyle{antiedge}=[decorate,decoration=zigzag,->,>=stealth',]

    \node [vertex] (a1) at (0,0) {$+(i,j,k)$};

    \node [vertex] (b1) at (-4,-3) {$-(j,j,j)$};
    \node [vertex] (b2) at (+4,-3) {$-(k,k,k)$};
    \draw [antiedge] (a1)--(b1);
    \draw [antiedge] (a1)--(b2);

    \node [vertex] (c1) at (-4,-6) {$+(j,p,q)$};
    \node [vertex] (c2) at (+4,-6) {$+(k,r,s)$};
    \draw [edge] (b1)--(c1);
    \draw [edge] (b2)--(c2);

    \node [vertex] (d1) at (-6,-9) {$-(p,p,p)$};
    \node [vertex] (d2) at (-2,-9) {$-(q,q,q)$};
    \node [vertex] (d3) at (+2,-9) {$-(r,r,r)$};
    \node [vertex] (d4) at (+6,-9) {$-(s,s,s)$};
    \draw [antiedge] (c1)--(d1);
    \draw [antiedge] (c1)--(d2);
    \draw [antiedge] (c2)--(d3);
    \draw [antiedge] (c2)--(d4);

    \node [vertex] (e1) at (-6,-12) {$+(p,\x_1,\x_2)$};
    \node [vertex] (e2) at (-2,-12) {$+(q,\x_3,\x_4)$};
    \node [vertex] (e3) at (+2,-12) {$+(r,\x_5,\x_6)$};
    \node [vertex] (e4) at (+6,-12) {$+(s,\x_7,\x_8)$};
    \draw [edge] (d1)--(e1);
    \draw [edge] (d2)--(e2);
    \draw [edge] (d3)--(e3);
    \draw [edge] (d4)--(e4);

   \end{tikzpicture}
 \caption{\label{figfig}
  Diagram showing how a depth-2 ``augmentation'' (or alternating-path tree)
  increases an assignment's cardinality by 1.
  This is the basis of algorithm \BDTSx{2};
   see Section~\ref{MP0}.
     Without loss of generality we assume that all current assignment
     elements are of the form $\triple u$ 
     Adding new first coordinate $i$ to the partial assignment,
     using element $(i,j,k)$,
     implies deletion of previous assignment elements
     $(j,j,j)$ and $(k,k,k)$. 
     Their first coordinates $j$ and $k$ are
     then reassigned
     respectively to elements $(j,p,q)$ and $(k,r,s)$,
     displacing four more existing assignment elements $(p,p,p)$ etc. 
     Their first coordinates are finally reassigned to
     unused second and third coordinates 
     by using elements $(p,\x_1,\x_2)$ etc.
  \label{treepic}}
\end{figure}

For dimensions ${\DD} \geq 4$ the algorithm can still be applied, 
but it produces a solution whose expected cost is of order $\Omega(n^{-1})$,
just as in the 3-dimensional case;
for ${\DD} \geq 4$ this is far from the upper bound of $O(n^{2-{\DD}} \log n)$
given in Theorem \ref{th1}.

\subsection{Planar assignment}
Our second main result (Theorem~\ref{th3}),
establishes high-probability lower and upper bounds for the cost of a 
Planar 3-dimensional assignment.
A lower bound of $\Omega(n)$ comes from attending to only the 
first constraint in \eqref{PP}.
We then show that a 3-dimensional Planar assignment
consists of $n$ 2-dimensional assignments,
with constraints between them.
An upper bound is derived from a greedy algorithm that solves
the 2-dimensional assignments sequentially,
respecting the constraints between them.
These 2-dimensional assignment instances have a 
somewhat complex structure,
but a general result of Dyer, Frieze, and McDiarmid \cite{DFM}
is insensitive to the details 
and yields an upper bound of $O(n \log n)$.

As for the Axial case, though, our approach to the Planar problem
falters for dimensions ${\DD} \geq 4$.
The natural generalization is to a greedy algorithm that sequentially 
solves $n$ $({\DD}-1)$-dimensional instances,
but even for ${\DD}=4$ such an algorithm can fail,
reaching an instance that has no solution, regardless of cost.

\subsection{Structure of the paper}
Our results for Axial assignment are formally stated in Section \ref{PV}.
There, Theorem \ref{th1} gives a simple lower bound,
and a non-constructive upper bound
that is also proved easily but only by relying on some heavy machinery.
Theorem \ref{th2} gives our main Axial result, 
a constructive upper bound for 3-dimensional assignment ($D=3$)
coming from
a bounded-depth alternating-path tree algorithm \BDTS=\BDTSx{\dd},
where $2\dd$ is the depth of the search tree. 
The algorithm and its analysis are undeniably complicated.
To provide intuition, \BDTSx{2} is described and analyzed in 
Section \ref{PV2}.
The general algorithm \BDTSx{\dd} is analyzed in Section \ref{MG},
completing the proof of Theorem \ref{th2}.
Difficulties for dimensions ${\DD} > 3$ are sketched in 
Section \ref{higherAxial}.

The Planar problem is considered in Section \ref{AV}.
In Section \ref{mainPlanar}, Theorem \ref{th3}
states a simple lower bound and a constructive upper bound.
These are proved in Sections \ref{3GA1} and \ref{3GA} respectively.
Difficulties for dimensions ${\DD} > 3$ are sketched in 
Section \ref{higherPlanar}.

\section{Multi-dimensional Axial assignment results}\label{PV}
\subsection{Simple bounds} \label{simple}

\begin{theorem}\label{th1}
The minimum, $\ZAx{{\DD}}$, of the ${\DD}$-dimensional Axial random assignment problem satisfies
$$\Omega\bfrac{1}{n^{{\DD}-2}}\leq \ZAx{{\DD}}\leq O\bfrac{\log n}{n^{{\DD}-2}}\quad\whp$$
\end{theorem}

\begin{proof}
Clearly
$$\ZAx{{\DD}}\geq \sum_{i_1=1}^n\min_{i_2,\ldots,i_{\DD}}M_{i_1,\ldots,i_{\DD}}.$$
Each summand 
is distributed as $\expdist(n^{{\DD}-1})$ and so has expectation $1/n^{{\DD}-1}$
and variance $1/n^{2{\DD}-2}$. 
The summands are independent, and
the Chebyshev inequality can be used to show that the sum is concentrated around the mean.

For the upper bound we use a recent result of Johansson, Kahn and Vu \cite{JKV} 
on perfect matchings in random ${\DD}$-uniform hypergraphs. 
Their result implies that \whp\
there is a solution that only uses ${\DD}$-tuples of weight at most 
$\frac{K\log n}{n^{{\DD}-1}}$,
thereby giving an upper bound for ``Shamir's problem''.
While \cite{JKV} does not deal with ${\DD}$-partite hypergraphs,
we have verified that their result can be extended to this case;
this was also done independently by Gerke and McDowell for different purposes
\cite{GM}.
The upper bound for ${\DD}$-dimensional Axial assignment follows immediately. 
\end{proof}

The proof in \cite{JKV} is emphatically non-constructive.

\subsection{Main result} \label{mainthm}
We remind the reader that a description of the algorithm \BDTSx{\dd} 
for 3-dimensional Axial assignment ($\DD=3$)
will follow in Section \ref{PV2} (for $\dd=2$) 
and Section \ref{MG} (for general $\dd$).
Throughout the paper, 
we associate the element $\Mijk$ with the ``triple'' $(i,j,k)$.
\begin{theorem}\label{th2}
Suppose that $1\leq \dd\leq \e\log_2\log n$ where
where $0<\e<1/2$ is a constant. 
For a random 3-dimensional Axial assignment instance, \whp:
\begin{description}
\item[(a)] Algorithm \BDTSx{\dd} runs in time $O(n^3)$. 
\item[(b)] The cost $M(T)$ of the set of triples $T$ output by \BDTSx{\dd} satisfies
$ M(T) = O(2^{4\dd} n^{-1+\tk_\dd}\log n) $, 
where
$\tk_\dd =\frac{1}{2^{\dd+1}-1}$.
\end{description}
\end{theorem}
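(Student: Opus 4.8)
The plan is to analyze the algorithm \BDTS($k$) by induction on the depth $k$ of the alternating-path tree, tracking two things at each level: the number of "active" elements that still need to be reassigned, and the cost budget allocated to each reassignment step. The key structural observation from Figure~\ref{treepic} is that the tree has a regular branching structure: one newly inserted triple displaces $2$ assignment elements, each of those is reassigned with a triple that displaces $2$ more, and so on, so that at level $\ell$ there are $2^\ell$ active elements. The tree has depth $k$, so the total number of nodes is $\Theta(2^k)$, which immediately explains the $2^k$ factor in part~(b) and, together with the need to search over choices of second and third coordinates at each node, the $n^{2^{k+2}}$ running time in part~(a) — part~(a) should be a relatively routine counting of the search space, so I would dispose of it quickly.

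For part~(b), the heart of the matter is choosing the per-level cost budgets $w_\ell$ so that (i) at each level, \whp\ there actually exists a valid reassignment of every active element with cost at most $w_\ell$, and (ii) the budgets telescope to give total cost $O(2^k n^{-1+\th_k}\log n)$. As the introduction's heuristic calculation indicates, the last level is the bottleneck: replacing a $[\kappa]^3$ submatrix by a $[\kappa+1]^3$ one succeeds with probability $\sim (n w)^{\kappa}$ per attempt, forcing $w = \Omega(n^{-1})$ there; earlier levels can afford smaller budgets because there the displaced "diagonal" elements $(p,p,p)$ etc.\ only need to be rerouted to fresh coordinates, an event with much higher probability. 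I would set up a geometric schedule $w_\ell$ decreasing from the bottom level upward, chosen so that $\sum_{\ell=0}^{k} 2^\ell w_\ell$ is dominated by a single term and equals $n^{-1+\th_k}$ up to the $\log n$ factor; solving the recursion that makes the failure probabilities at every level simultaneously $o(1)$ is exactly what produces the exponent $\th_k = 1/(2^{k+1}-1)$. The $\log n$ is the usual slack needed to push a union bound over $\Theta(n)$ iterations (one per element of $[n]$ being assigned) below~$1$, via a Chernoff/Markov estimate on the number of attempts.

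The main obstacle — and, as the paper flags, "the main technical aspect" — is \emph{conditioning}: when we reveal that certain triples are cheap in order to build the tree, we bias the conditional distribution of the remaining entries of the cost tensor, and a naive independence assumption is false. The fix is to restrict the augmentation so that each level only ever exposes information about a controlled, disjoint set of tensor entries (the coordinates $\x_1,\ldots,\x_8,\ldots$ at the leaves are chosen from an "unassigned" reservoir precisely to keep them independent of the history), and then to argue that conditioned on everything revealed so far, the entries relevant to the next level are still i.i.d.\ \EX, or at least stochastically dominate such a family. Making this rigorous requires carefully specifying the order in which entries are queried and showing the exposed set stays small relative to $n^3$; this is where I expect most of the real work to lie. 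Once the conditioning is tamed, parts (a) and (b) follow by combining the per-level existence bounds with the union bound over the $n$ outer iterations and the telescoping cost estimate. The three-level warm-up promised in Section~\ref{PV2} is the natural place to exhibit the conditioning argument in a concrete case before the general induction in Section~\ref{MG}.
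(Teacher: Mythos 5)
Your outline captures some of the right intuition — the $2^{k+1}-1$-node tree, the telescoping cost budgets, the bottleneck at the root, and the fact that conditioning is the central technical issue — but there are two substantive misunderstandings that would prevent the sketch from becoming a proof.

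First, the cost budget in the paper is indexed by \emph{round}, not by \emph{level of the tree}. Phase 2 of \BDTS\ is organized into rounds $t = 1,\ldots,t_0$, where round $t$ shrinks the set $X(T)$ of unassigned first coordinates from $x_t$ to $x_{t+1}=\beta x_t$ for a constant $\beta<1$, and within a round \emph{every} triple added to the tree must have (refreshed) cost at most a single threshold $w_t$. The existence proof is a bottom-up induction over tree levels using the recursion $\nu_{l+1} = w n \nu_l^2/2$, $\nu_0 = w n x^2/2$; the number of usable indices grows doubly exponentially up the tree, and requiring $w\nu_{k-1}^2 = \Theta(\log n)$ at the root is what forces $w_t \approx x_t^{-1-\th_k}n^{\th_k-1}\log^{\th_k}n$ and ultimately produces the exponent $\th_k = 1/(2^{k+1}-1)$. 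Your ``geometric schedule $w_\ell$ decreasing from the bottom level upward'' is a different object, and the derivation of $\th_k$ that you gesture at would not come out of it; the geometric decay in the paper is over rounds (driven by the shrinkage $x_{t+1}=\beta x_t$), and $\sum_t (x_t - x_{t+1}) W_t$ is the sum that telescopes to $O(n^{\th_k-1}\log n)$.

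Second, and more importantly, your proposed resolution of the conditioning problem — ``argue that conditioned on everything revealed so far, the entries relevant to the next level are still i.i.d.\ \EX, or at least stochastically dominate such a family'' — is not how the paper handles it and would be very hard to make true directly: observing whether an entry is below a threshold genuinely changes its conditional law. The paper's mechanism is a \emph{refresh} based on the memoryless property of the exponential distribution. Because in round $t-1$ the algorithm only ever tests whether entries are $\leq w_{t-1}$, one can define $C'_{i,j,k} = C_{i,j,k}-w_{t-1}$ on the entries found to exceed $w_{t-1}$ and replace the others with fresh \EX\ samples; the result is a genuinely i.i.d.\ \EX\ array $C'$ with $C \leq C' + w_{t-1}$ pointwise. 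This is what allows each round to start from scratch, at the explicit cost of $W_t = w_0 + \cdots + w_t$ per triple, which is the precise reason the final cost bound is $O(2^k n^{-1+\th_k}\log n)$ rather than something involving only the last round's $w$. Without the refresh idea, the within-round bottom-up induction and the between-round telescoping both collapse.

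You also omit the Greedy Phase, which constructs the initial partial assignment of size $n - n^{1-\th_k}$ (needed so that $x_1 = n^{1-\th_k}$ and the round budgets are small enough), and the Final Phase, which is needed because when $x < 2^k$ the bottom-up construction is impossible (there are not enough unused leaf coordinates); the paper switches to a top-down construction with a full refresh after each augmentation. These aren't just polish — the Greedy Phase sets up the starting point that makes the telescoping sum small, and the Final Phase covers a regime where your inductive argument does not apply.
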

By taking $\dd=\e\log_2\log n$ we see that have a linear-time 
algorithm that \whp\ gives a solution of value $O(n^{-1+o(1)})$.
(The exponent's $o(1)$ is $\tfrac12 \log^{-\e}n$ plus lower-order terms.)
As already noted, we are not aware of any other polynomial-time algorithm that will \whp\
find a solution of value $O(n^{-1+o(1)})$,
or indeed any non-trivial bound including the $O(n^{-6/7})$ 
that \BDTSx{2} gives.

\section{3-Dimensional Axial assignment 2-level algorithm, \BDTSx{2}} \label{PV2}
In this section we consider a two-level version of the algorithm \BDTS\
for 3-dimensional Axial assignment.
In this way we hope to make it easier
to understand the general version described in Section \ref{MG}.
With reference to Theorem~\ref{th2},
the two-level version means taking $\dd=2$, $\tk=\tk_2=1/7$.
The algorithm has three phases,  
treated respectively in the next three subsections.

\subsection{Greedy Phase}\label{G1}
This phase uses a simple greedy procedure, 
shown below as Algorithm \ref{greedy2},
to construct a low-cost partial assignment of cardinality $(1-o(1))n$.
Let 
\begin{align} \label{params2}
n_1 &= n-n^{1-\tk} 
\qquad \text{and} \qquad 
w_0 = 2 n^{-2(1-{\tk})}\log n .
\end{align}
In the following we will pretend that $n_1$ is integral, 
as we will do throughout when it makes no difference.
There will be cases where integrality is significant,
and there we will be precise.

\begin{algorithm}[H]
\caption{\GreedyPhase: construct a cardinality-$n_1$ partial assignment}
\label{greedy2}
\begin{algorithmic}[1]
\STATE 
Let $B:=C:=[n]$, and $T:=\emptyset$ 
 (respectively the set of unassigned 2- and 3-coordinates,
 and the set of triples assigned so far)%
\FOR{$i=1,\ldots,n_1$}
\STATE Locate $\Mijk =\min\set{M_{i,j',k'} \colon j'\in B,k'\in C}$
\FAILIF {$\Mijk > w_0$}
\STATE Add $(i,j,k)$ to $T$ and remove $j$ from $B$ and $k$ from $C$
\ENDFOR
\STATE Return the set of triples in $T$ as a partial assignment
\end{algorithmic}
\end{algorithm}

\begin{lemma}\label{lem1}
With high probability the procedure does not fail, and yields 
$$Z_1 : = \sum_{(i,j,k)\in T} \Mijk 
\leq \frac{2}{n^{1-\tk}} . $$
\end{lemma}

\begin{proof}
We observe that if $(i,j,k)\in T$ then 
$\Mijk$ is the minimum of $(n-i+1)^2$ independent
copies of $\EX$ and is therefore distributed as $\expdist((n-i+1)^2)$.
Furthermore, the random variables $\Mijk, (i,j,k)\in T$ are independent
(each drawn from a different 1-plane as given by $i$).

The procedure fails only if some $\Mijk > w_0$.
Even in the last round $i=n_1$, the chance that all $(n^{1-\tk})^2$ elements 
considered are larger than $w_0$ is 
$\exp(-n^{2(1-\tk)} w_0) = \exp(-2 \log n)$.
By the union bound, the probability of a failure in any round
is $\leq n \exp(-2 \log n) = n^{-1}$.
We now ignore this aspect and imagine running the algorithm
without the failure condition.

Using the fact that an $\expdist(\l)$ random variable
has mean $1/\l$ and variance $1/\l^2$,
$$\E(Z_1)=\sum_{i=1}^{n_1}\frac{1}{(n-i+1)^2}\leq \int_{x=1}^{n_1+1}\frac{dx}{(n-x+1)^2}\leq \frac{1}{n^{1-\tk}}.$$
Now
$$\V(Z_1)=\sum_{i=1}^{n_1}\frac{1}{(n-i+1)^4}\leq \frac{3}{n^{3(1-\tk)}}=o(\E(Z_1)^2)$$
and the lemma follows from the Chebyshev inequality.
\end{proof}

\begin{remark}  \label{GreedyNote}
If we replace $n_1$ by $n-\om$ where $\om=\om(n)\to\infty$ and then do exhaustive search for the optimum 
solution in the remaining $\om$-size problem then we see that (i) the expected optimum value is bounded by 
$O\brac{\frac{1}{\om}+\frac{\log\om}{\om}}$ (greedy plus exhaustive search
costs), and (ii) the running time is bounded by $O\brac{n^3+\om^{2\om}}$. So, if $\om=O(\log n/\log\log n)$, the algorithm is polynomial
and produces a solution with a cost that is $o(1)$, in expectation and \whp.
\end{remark}
This justifies our introductory remark that it is
not difficult to prove $\ZAx{d}\to 0$ \whp\ for $\Mijk=\EX$.
(We do not even need the breakthrough result of \cite{JKV} for this. 
The early reult of Schmidt-Pruzan and Shamir \cite{SPS} would give $O\brac{\frac{1}{\om}+\frac{\log\om}{\om^{1/2}}}$
for the cost).

\subsection{Main Phase}\label{MP0}
This phase will increase the size of the partial assignment
defined by $T$ to $n-\fourm$.
The phase will proceed in \emph{rounds}.
At the start of each round we assume that 
the assignment elements $T_t$ are
$\triple1, \ldots, \triple{n_t}$,
which we can arrange
simply by permuting the array $M$.
Let $A=A(T)$ be the set of 1-coordinates assigned in $T$
and define $B$ and $C$ analogously for the 2- and 3-coordinates.
Of course initially $A=B=C=\set{1,\ldots,n_t}$ but the notation will be convenient.
Also, let $\bA(T) = [n]-A(T)$, the set of unmatched 1-indices,
and likewise define $\bB$ and $\bC$.

Round $t$ will reduce the number of unmatched elements
$\card{\bA}$ by a factor $\b$,
from $x_{t}$ to $x_{t+1}$,
while increasing the total cost of the matching by an acceptably small amount,
terminating in the round $\tau$ where $\card \bA$ is reduced to \fourm.
Because the sizes $x$ we are dealing with here become small, 
we cannot decently ignore integrality even though its effects are minor.
Specifically, then, we take
\begin{gather}
\a = 1/65
 , \qquad
\b = 1-\a,
\notag \\
x_1=n-n_1 = \floor{n^{1-\tk}} = \floor{n^{6/7}}
 \text{, and } x_t= \floor{ \b^{t-1}x_1} \text{ for $t\geq 2$}. \label{xt}
\end{gather}
Note that \eqref{xt} means that $x_t$ decreases by 1 
in each of the last few rounds, so the last round begins with $x_\tau=\four$.
It is clear that
\begin{align}
x_t \leq \b^{t-1}x_1
 ,\quad \text{thus } \quad
\tau \leq \log_{1/\b}(x_1 / \four)=O(\log n) .    \label{xtsize}
\end{align}

Recall from the Greedy Phase that 
\begin{align*}
 w_0 &=  n^{-12/7}\log n ,
\end{align*}
and, for $t \geq 1$, let
\begin{align}
\left.
\begin{aligned}
\pp_t &= n^{-6/7}x_t^{-8/7}\log n , 
\\
w_t &= -\log(1-\pp_t) = (1+o(1)) \pp_t,
\\
W_t &= w_0+w_1+\cdots+w_t .
\end{aligned}
\quad \right\} 
 \label{wdef}
\end{align}
At the start of round $t$ we will have revealed all elements (and only
elements) with values $\leq W_{t-1}$.
This is true in round $t=1$ by 
the Greedy Phase's definition. 
In later rounds $t$ we use the following ``good'' elements,
comprising a set $G_t$:
\begin{enumerate}
\item
Elements in the partial assignment $T_t$ at the start of the round.
\item
Elements with $\CC \in (W_{t-1}, W_t]$.
Conditioned upon $\CC>W_{t-1}$ and all history, 
a non-assignment element falls into this category with probability exactly 
$\pp_t = 1-\exp(-w_t)$.
\item
Previously revealed non-assignment elements, 
each taken with probability $\pp_t$,   
the selection made independently over all such elements.
\end{enumerate}
(In contrast to this ``sprinkling'' approach, 
\cite{CS99} used a slightly more complicated ``refreshing'' approach
to obtain small improvements to the constants.)
This selection assures the following property.
\begin{property} \label{goodset}
$G_t$ comprises only elements of weight $\leq W_t$, 
contains all elements in the partial assignment, 
and (conditioned upon all history)
contains each non-assignment element, independently, with probability 
$\pp_t$.
\end{property}

At the start of round $t$ let $A_t=A(T_t)$, $\bA_t=\bA(T_t)$,
and likewise for $B$ and $C$.
In round $t$ we will add $[n-x_t+1,n-x_{t+1}]$ to $A_t$. 
To add $i$ to $A_t$ we 
replace 6 of the triples in $T_t$ by 7 new triples
(see Figure~\ref{figfig}):
\begin{multline}\label{add}
+(i,j,k)-(j,j,j)-(k,k,k)+(j,p,q)+(k,r,s)-(p,p,p)-(q,q,q)-(r,r,r)-(s,s,s)+\\
(p,\x_1,\x_2)+(q,\x_3,\x_4)+(r,\x_5,\x_6)+(s,\x_7,\x_8).
\end{multline}
We call a collection of triples as in \eqref{add} an \emph{augmentation}.
Here $+$ indicates addition of a new triple to the assignment, 
$-$ indicates deletion of an existing assignment element,
$j,k,p,q,r,s \in A_t$ are all distinct, 
$\x_1,\ldots,\x_7 \in \bB_t$ are all distinct,
$\x_2,\ldots,\x_8 \in \bC_t$ are all distinct,
and each of the triples added in \eqref{add}
belongs to $G_t$ (thus has cost at most $W_t$).
To explain, we assign a new 1-coordinate $i$
to a \emph{previously used} 2-coordinate $j$ and 3-coordinate $k$
(unused ones are too rare and therefore too costly);
these uses of $j$ and $k$ conflict with the previous assignment elements 
$\triple j$ and $\triple k$ so we remove those elements from the assignment;
1-coordinates $j$ and $k$ are re-added as $(j,p,q)$ and $(k,r,s)$
thus colliding with the previous assignment elements
$(p,p,p)$, $(q,q,q)$, $(r,r,r)$, and $(s,s,s)$;
and finally 1-coordinates $p$, $q$, $r$, $s$
are re-added as $(p,\x_1,\x_2)$ etc.,
where the $\x_i$ are 2- and 3-coordinates \emph{not} previously assigned.
One may think of \eqref{add} as a binary tree version of
an alternating-path construction;
we will control the cost despite the tree's expansion.

We realise that the algorithm is not yet completely specified but we postpone questions
on index selection for a while longer. We first confirm that if we can always find $j,k,\ldots,\xi_8$ as in \eqref{add}
then the cost of the assignment produced by the Main Phase is acceptable.
Each application of
\eqref{add} increases the cost by $\leq 7 W_t$.
Success in a round means doing this $x_t-x_{t+1}$ times, in which
case the additional cost of the Main Phase will be at most 7 times
\begin{align}
\sum_{t=1}^{\tau}(x_t-x_{t+1})W_t
 &= x_1w_0 - x_{\tau+1} W_{\tau} +\sum_{t=1}^{\tau}x_tw_t ,
\notag 
\intertext{which (recalling \eqref{xtsize}) is at most $7+o(1)$ times 
}
 n^{-6/7}\log n+
 \log n
 &
 \sum_{t=1}^{\tau} x_t (n^{-6/7} x_t^{-8/7})
\notag \\
 & \leq n^{-6/7}\log n+
n^{-6/7} \log n \, \sum_{t=0}^{\tau-1} (\b^t x_1)^{-1/7} .
\notag
\intertext{The last is an increasing geometric series,
summing to 
$\leq ((1/\b)^\t/x_1)^{1/7} / (1-\b^{1/7})$.
By \eqref{xtsize}, $(1/\b)^\t \leq x_1/4$, so the expression above is}
\notag 
 & \leq n^{-6/7}\log n+
 n^{-6/7} \log n \: \frac{(1/\four)^{1/7}}{1-\b^{1/7}}
\notag \\
 & =O(n^{-6/7}\log n).
 \label{mainsum2}
\end{align}

With reference to Figure \ref{figfig2},
we now complete the description of the algorithm by explaining how 
we search for 
$j,k,\ldots,\xi_8$ that satisfy \eqref{add}. 
We show later that \whp\ all of our searches succeed.
Since we are considering just round $t$, 
we drop the superscripts and write, for example, $B$ rather than $B_t$.

We will first show how to 
{construct sets} $J=\SS 1 1,\ldots,\Xi_8=\SS 3 8$,
then show how to draw from these sets $\a x$ collections of elements
$j \in J$, \dots, $\xi_8 \in \Xi_8$,
each such collection augmenting the assignment as per \eqref{add}.
Within each augmenting set,
each chosen element will force the choice of the two elements below it.
For convenience, especially in the general case but even for 
the depth $k=2$ case now,
it is convenient to think of these sets as $\SS 1 1,\ldots,\SS 3 8$
as shown in the figure.

\begin{figure}[htb] 
\centering
  \begin{tikzpicture}[scale=0.8,thick]
   \tikzstyle{vertex}=
     [rounded corners,fill=blue!10,inner sep=1ex,outer sep=0.5ex]
   \tikzstyle{edge}=[-] 
   \tikzstyle{antiedge}=[decorate,decoration=zigzag,->,>=stealth',]

    \node [vertex] (c1) at (-5.5,-9) {$J = \SS 1 1$};
    \node [vertex] (c2) at (+4.5,-9) {$K = \SS 1 2$};

    \node [vertex] (e1) at (-8,-12) {$P = \SS 2 1$};
    \node [vertex] (e2) at (-3,-12) {$Q = \SS 2 2$};
    \node [vertex] (e3) at (+2,-12) {$R = \SS 2 3$};
    \node [vertex] (e4) at (+7,-12) {$S = \SS 2 4$};
    \draw [edge] (c1)--(e1);
    \draw [edge] (c1)--(e2);
    \draw [edge] (c2)--(e3);
    \draw [edge] (c2)--(e4);

    \node [vertex] (E1) at (-9.3,-15) {$\Xi_1 = \SS 3 1$};
    \node [vertex] (E2) at (-6.7,-15) {$\Xi_2 = \SS 3 2$};
    \node [vertex] (E3) at (-4.3,-15) {$\Xi_3 = \SS 3 3$};
    \node [vertex] (E4) at (-1.7,-15) {$\Xi_4 = \SS 3 4$};
    \node [vertex] (E5) at (+0.7,-15) {$\Xi_5 = \SS 3 5$};
    \node [vertex] (E6) at (+3.3,-15) {$\Xi_6 = \SS 3 6$};
    \node [vertex] (E7) at (+5.7,-15) {$\Xi_7 = \SS 3 7$};
    \node [vertex] (E8) at (+8.3,-15) {$\Xi_8 = \SS 3 8$};
    \draw [edge] (e1)--(E1);
    \draw [edge] (e1)--(E2);
    \draw [edge] (e2)--(E3);
    \draw [edge] (e2)--(E4);
    \draw [edge] (e3)--(E5);
    \draw [edge] (e3)--(E6);
    \draw [edge] (e4)--(E7);
    \draw [edge] (e4)--(E8);
   \end{tikzpicture}
\caption{\label{figfig2}}
\end{figure}

We construct the sets $\SS \ell m$ from bottom up.
At the bottom level, $\ell=3$,
``equipartition'' $\bB$ into sets $\Xi_1,\Xi_3,\Xi_5,\Xi_7$.
To be precise, partition $\bB$ into sets $\Xi_1,\ldots,\Xi_7$
each of size $\floor{ \card{\bB}/4 }$, and a fifth ``discard'' set.
(Normally we would ignore integrality, 
but for instance in the second-last round
$\bB$ will have size \fourp.)
Similarly, equipartition $\bC$ into $\Xi_2,\Xi_4,\Xi_6,\Xi_8$.
Note that for $x \geq 4$,
$x-4\floor{x/4} < 4 \leq 4 \floor{x/4}$, thus $x < 8\floor{x/4}$,
and thus each set $\Xi$ has cardinality at least $x/8$.

For $\ell<3$,
each set $U = \SS {\ell} m$ in Figure \ref{figfig2} 
is constructed from its two child sets 
$V = \SS\ellp{2m-1}$ and $W = \SS\ellp{2m}$
and from a set $X$ of ``available'' 1-coordinates
(soon to be defined precisely)
by invoking a procedure 
$\Triples(X, V, W)$.
Recall that $G=G_t$ is the set of ``good'' elements for this round.
Intuitively, we want $\Triples(X,V,W)$
to be something like
the set of all elements $x \in X$ for which 
there are a $v \in V$ and $w \in W$ such that $(x,v,w)$ is of low cost, i.e.,
something like
$\proj_1\left( (X \times V \times W) \cap G \right) , $
where $\proj_i$ denotes the projection of a set onto its $i$th coordinate.
The more complicated definition of \Triples below
enables Property \ref{tripleprop}
and simplifies the analysis later.

\begin{algorithm}[H]
\caption{\Triples: construct set $U$ from sets $X,V,W$ of available
 1-, 2-, and 3-indices}
\label{triples}
\begin{algorithmic}[1]
\STATE Input $V,W,X$.
\FAILIF {$\card X < n- n^\nt$} 
   \label{tripf1} 
 \STATE Partition $X = X' \cup X''$ where $\card{X'} = \xcard := \xcardl$
  and $\card{X''} \geq n^\nt$
  (think of $X''$ as a ``reserve'' set
  from which elements are shifted into $X'$ to preserve $\card{X'}$)  \label{reserve}
\STATE Let $\Ustar := \emptyset$
\FOR{each pair of elements $v \in V$, $w \in W$ in turn}
  \FOR{each element $x \in X$, in turn} \label{triples from}
   \IF{ $(x,v,w) \in G$}
    \STATE{ $\Ustar := \Ustar \cup \set{(x,v,w)}$ }
    \FAILIF{$X'' = \emptyset$} \label{tripf2}
    \STATE{ Where $x''$ is the first element of $X''$,
            let $X' := (X' \setminus x) \cup \set{x''}$,
	    and let $X'' := X'' \setminus x''$
	   } \label{killx}
   \ENDIF
  \ENDFOR \label{triples to}
\ENDFOR
\STATE Let $U := \proj_1 (\Ustar)$ 
\STATE Return $(\Ustar,U,X)$
\end{algorithmic}
\end{algorithm}
Claim \ref{tripf} will show that failures within \Triples are highly unlikely.

\begin{property} \label{tripleprop}
\Triples has the properties that
(1) each element $u \in U$ appears in exactly one triple $(u,v,w) \in \Ustar$
and thus is linked to a unique $v \in V$ and $w \in W$,
and 
(2) each pair $v,w$ occurs in $\Bin(\xcard,\pp)$ triples in $\Ustar$
and these binomials are independent for all $v$ and $w$.
\end{property}

\begin{proof}
The first property is immediate.
The second comes from observing that in the algorithm's 
Lines \ref{triples from}--\ref{triples to}
the size of $X'$ is kept constant at $n'$, while
Property \ref{goodset} 
means that each of the $n'$ elements considered is, independently,
present in $G$ with probability $\pp$. 
\end{proof}

Given \Triples, it is straightforward to define
all the sets in Figure \ref{figfig2}.
This is done by \SetConstruct, shown as Algorithm \ref{SetConstruct} below.

\begin{algorithm}[H]
\caption{\SetConstruct: for Main Phase round $t$, construct all sets $\SS \ell m$}
\label{SetConstruct}
\begin{algorithmic}[1]
\STATE Input: sets $A$, $B$, and $C$ of assigned 1-, 2-, and 3-indices. 
  (Elsewhere, notationally, we have assumed $A=B=C=[n-x_t]$.)
\STATE Let $X := {A}$ and $x:=n-\card X$ 
\STATE ``Equipartition'' $\bB$ into $\Xi_1\cup\cdots\cup\Xi_7$ 
  $= \SS 3 1 \cup \cdots \cup \SS 3 7$
  each of size $\floor{x/4}$, 
  discarding the $x-4 \floor{x/r}$ remaining elements of $\bB$
\STATE Likewise, equipartition
  $\bC$ into $\Xi_2\cup\cdots\cup\Xi_8$
  $= \SS 3 2 \cup \cdots \cup \SS 3 8$
\STATE Remembering that each application of \Triples makes $X$ smaller
for the next one, let
\begin{align*}
\begin{array}{rclclcl}
\outtrip P &=& \OTR{2}{1}&:=&\Triples(X, \SS 3 1,\SS 3 2)&=&\Triples(X, \Xi_1, \Xi_2)\\
\outtrip Q &=& \OTR{2}{2}&:=&\Triples(X, \SS 3 3,\SS 3 4)&=&\Triples(X, \Xi_3, \Xi_4)\\
\outtrip R &=& \OTR{2}{3}&:=&\Triples(X, \SS 3 5,\SS 3 6)&=&\Triples(X, \Xi_5, \Xi_6)\\
\outtrip S &=& \OTR{2}{4}&:=&\Triples(X, \SS 3 7,\SS 3 8)&=&\Triples(X, \Xi_7, \Xi_8)\\
\\
\outtrip J &=& \OTR{1}{1}&:=&\Triples(X,\SS 2 1,\SS 2 2)&=&\Triples(X,P,Q)\\
\outtrip K &=& \OTR{1}{2}&:=&\Triples(X,\SS 2 3,\SS 2 4)&=&\Triples(X, R,S)
\end{array}
\end{align*}
\STATE Return all sets $\SS \ell m$ and $\SSstar \ell m$ defined above
\end{algorithmic}
\end{algorithm}

Given a partial assignment $T$
with $i-1<n$ elements, 
augmenting $T$ to include 1-index $i$ 
as in \eqref{add}
and Figure \ref{treepic}
is easy as if
we can find an element $(i,j,k) \in G$ with $j \in J$, $k \in K$.
For, if this is so, 
by construction of $\Jstar$
there is a unique element $(j,p,q) \in \Jstar$
with $p \in P$ and $q \in Q$.
Then, by construction of $\Pstar$ there is a unique element
$(p,\xi_1,\xi_2) \in \Pstar$ with $\xi_1 \in \Xi_1$ and $\xi_2 \in \Xi_2$.
Likewise, $q$ leads uniquely to $\xi_3,\xi_4$, 
and $k$ leads uniquely to $r,s$ and these to $\xi_5,\ldots,\xi_8$.
This collection, corresponding to the triples added in \eqref{add},
immediately determines those subtracted: $-\triple j$, $-\triple k$, etc.,
and thus the full augmentation.
This procedure for producing an augmentation is spelled out 
as \MakeTree$(i,j,k)$, Algorithm \ref{MakeTree}.
Identifying $\xi_1,\ldots,\xi_8$ with $\ss 1 1,\ldots,\ss 1 8$,
this is written in a way that can be quoted for the general case.

\begin{algorithm}[H]
\caption{\MakeTree$(i,j,k)$: produce an augmentation $\outlist$}
\label{MakeTree}
\begin{algorithmic}[1]
\STATE Input: a collection of sets as in Figure \ref{figfig2}
 as output by Algorithm \ref{SetConstruct} (\MakeTree),
 and a triple $(i,j,k)$ with
 $i \in \bA$, 
 $j = \ss 1 1 \in \SS 1 1 = J$,
 $k = \ss 1 2 \in \SS 1 2 = K$.
\STATE Let $\outlist := { +(i,\ss 1 1,\ss 1 2) }$
\FOR{$\ell =$ 1 to 2}  \label{mt3}
 \FOR{$m = 1$ to $2^\ell$}
   \STATE (Note that $\ss \ell m \in \SS \ell m$ is already defined) \label{mt5}
   \STATE Let $\outnode$ be the unique triple 
          in $\SSstar \ell m$ with 1-coordinate $\ss \ell m$
	  (thus defining the values for Line \ref{mt5} 
	  when $\ell$ is incremented in Line \ref{mt3})
	  \label{mt6}
   \STATE Let $\outlist := \outlist, -\triple{\ss \ell m}, +\outnode$, i.e.,
     add these two triples to the output, one negated, one positive
 \ENDFOR
\ENDFOR
\STATE Return $\outlist$ 
\end{algorithmic}
\end{algorithm}

\begin{property} \label{setprop}
In any augmentation constructed by \MakeTree,
the first indices of all positive triples are distinct,
as are all the second indices, and all the third indices.
\end{property}

\begin{proof}
All first indices 
in the whole collection of sets $\SS \ell m$ 
(and therefore within one augmentation)
are distinct
because they are drawn from a set $X$ ($X = A$ initially),
and each first index produced is immediately removed from $X$.

Any second index $v$ comes from a triple in some set
$\SSstar \ell {m} \subset X \times \SS \ellp {2m-1} \times \SS \ellp {2m}$,
so that $v \in \SS \ellp {2m-1}$.
We treat this with two cases.
If $\ell<d$, then $v \in \SS \ellp {2m-1}$
is a first index in $\SSstar \ellp {2m-1}$.
Such values are all distinct from one another by distinctness of first
indices.
If on the other hand $\ell=d$, 
the previous case does not apply (there are no sets $\Sstar$ at depth $\ddp$),
but $v \in \SS \ddp {2m-1} = \Xi_{2m-1}$
and thus belongs to one of the parts of the equipartition of $\bB$.
Since in an augmentation each triple is drawn from a different set $\Xi$,
and the sets come from different parts of the equipartition of $\bB$,
all these values are distinct from one another.
Finally, no two values from different cases can be equal:
those in the first case belong to $A$ 
and those in the second case belong to $\bB$,
and these sets are disjoint since $A=B$
by our notational assumption that the partial assignment $T$
consists of triples of the form $\triple i$.
Third indices are treated by the same argument as second indices.
\end{proof}

We can then augment the assignment via \eqref{add} 
the output of \MakeTree. We now give a formal proof that \MakeTree allows a feasible augmentation.
Again, we write this in a way that can be quoted for the general case.

\begin{property}   \label{valid1}
The output of \MakeTree defines a valid assignment augmentation.
\end{property}

\begin{proof}
Working backwards through the elements of $\outlist$ we show inductively that 
each makes a valid change.
(\MakeTree works top-down; this proof works bottom-up.)
Consider adding a bottom-most triple
$+\sspar \dd m$.
Its second and third coordinates can be added, as they belong to the sets
$\SS\ddp{2m-1}$ and $\SS\ddp{2m}$ 
which contain only unassigned indices,
and (over all bottom-most triples)
there are no collisions amongst these
by Property \ref{setprop}.
The first coordinate would clash with the existing assignment element,
but this is removed by the $\outlist$ element $-\triple{\ss \dd m}$,
also freeing up $\ss \dd m$ as a 2- and 3-index.
So, adding the level $\ell=\dd$ elements,
pairing the positives and negatives (so the assignment size does not change),
ensures that all their first coordinates at level $\ell=\dd$ are available as 2- and 3-indices
at level $\ell-1$. We refer to this as property $\cQ$ at level $\ell=d$.

We now show that property $\cQ$ at level $\ellp$
implies property $\cQ$ at level $\ell$.
Consider a level-$\ell$ element $+\sspar \ell m$.
Referring to \MakeTree's Line \ref{mt6},
this element's 2-index $\ss {\ellp} {2m-1}$ (in iteration $\ell$)
is another element's 1-index (in iteration $\ellp$),
and thus, by Property $\cQ$ at level $\ellp$,
is available for assignment.
This 2-index does not conflict with any other,
by Property \ref{setprop}.
The same holds for the 3-indices.
The 1-index $\ss \ell m$ would conflict with an existing assignment element,
but this is taken care of by $-\triple{\ss \ell m}$.
That also frees up $\ss \ell m$ as an available 2- and 3-index,
completing property $\cQ$ at level $\ell$.

When $\ell=1$, $(i,\ss 1 1,\ss 1 2)$ can be added: 
the 2- and 3-indices are available by Property $\cQ$,
while 1-index $i$ is available 
(indeed the whole point of the procedure is to assign it).
\end{proof}

However, to make more than one such augmentation using the same 
data structure (the same output of \SetConstruct)
we must ensure, for example, that no two augmentations
use the same value $\xi_1$.
To this end, 
if a first augmentation uses ``leaf'' indices $\xi_1,\ldots,\xi_8$,
we regard the values $\xi_1, \ldots, \xi_8$ as ``poisoned'';
they in turn poison all ancestors depending on them,
including this augmentation's values $j,\ldots,s$, but also other values.
Contrapositively, any ``healthy'' (non-poisoned) elements 
$j' \in J$ and $k' \in K$'
depend only on healthy elements $\xi$. 

To be precise, a poisoned $\xi_1$ in turn poisons any ancestor
$p' \in P$ for which there exists a triple
$(p',\xi_1,\xi_2') \in \Pstar$.
Likewise the poisoned $\xi_2$ poisons any ancestor 
$p' \in P$ for which there exists a triple
$(p',\xi_1',\xi_2) \in \Pstar$.
Note that the poisoned $P$ values include the original $p$
but may include more values.
(Each $p$ is associated with a unique $\xi_1$ and $\xi_2$,
but the opposite is by no means true.)
We continue upward.
Any poisoned $p \in P$ poisons any $j' \in J$ 
for which there exists a triple $(j',p,q') \in \Jstar$,
and poisoned $q$ values poison additional elements of $J$.
The poisoned $J$ values certainly include $j$.
Likewise, the poisoned values $\xi_5,\ldots,\xi_8$ poison subsets of $R$ and $S$,
which in turn poison some of $K$.

In general, quite simply, any poisoned element poisons all its ancestors,
where ancestry is the transitive closure of the parent relation in which,
for any $(u,v,w) \in \SSstar \ell m$, $u$ is a parent of $v$ and $w$.
(Again, Property \ref{tripleprop} means that $u$ has only these children,
but $v$ and $w$ will typically each have additional parents.)
The step-by-step poisoning description given in the previous paragraph
will be useful for probabilistic analysis, 
and is shown as Algorithm \ref{poisonPropagate} in a form usable for
\BDTSx{\dd} generally.

\def\STATEx#1{{\def\alglinenumber##1{}\STATE #1}\addtocounter{ALG@line}{-1}}

\begin{algorithm}[H]
\caption{\PoisonPropagate: poison all ancestors of elements used in an
augmentation} 
\label{poisonPropagate}
\begin{algorithmic}[1]
\STATE Input: the output of \SetConstruct
  with some elements poisoned,
  and an augmentation. 
\STATE Poison the augmentation's leaf values $\ss{3}{1},\ldots,\ss{3}{8}$
  \label{poison1}
\FOR{$\ell =$ 2 to 1} \label{poison2}
 \FOR{$m = 1$ to $2^\ell$}
   \FOR{each $(u,v,w) \in \SSstar \ell m$}
    \IF{$v \in \SS \ellp{2m-1}$ or 
              $w \in \SS \ellp{2m}$ is poisoned}
       \STATE Poison 
         $u \in \SS \ell m$ 
    \ENDIF
   \ENDFOR
 \ENDFOR
\ENDFOR
\end{algorithmic}
\end{algorithm}

With this, we can present the complete Main Phase algorithm.

\begin{algorithm}[H]
\caption{\MainPhase: augment \GreedyPhase assignment to one of cardinality $n-(2^d-1)$} 
\label{mainalg2}
\begin{algorithmic}[1]
\FOR{rounds $t=1,\ldots,\tau$}
\STATE (Recall that the partial assignment has cardinality $n-x_t$)
\STATE Permute the cost matrix so that the partial assignment is
  \\
  $T_t = \set{ \triple 1,\ldots, \triple {n-x_t} }$
\STATE Let $A=B=C=[n-x_t]$ be the sets of assigned 1-, 2-, and 3-coordinates
\STATE Call \SetConstruct$(A,B,C)$ and
  consider all elements in all its sets to be healthy
  \label{MainSetConstruct}
\STATE (We will now assign the first $\ceil{ \a x_t}$ unassigned 1-indices 
        $i$, in turn)
\FOR{
  $i=n-x_t+1,\ldots,n-x_{t+1}$}
\STATE Let $J'$ and $K'$ denote the healthy subsets of $J$ and $K$
\FAILIF{$I' := G_t \cap (\set{i} \times J' \times K') = \emptyset$}
\label{maintest}  
  \STATE
   Let $(i,j,k)$ be the lexicographically first element of $I'$ \label{line search}
  \STATE \label{line makeTree}
   Call \MakeTree$(i,j,k)$ to produce an augmentation $\F$ 
  \STATE
   Augment the assignment with $\F$
   \label{augmentation}
  \STATE Call \PoisonPropagate($\F$) 
\ENDFOR
\ENDFOR
\end{algorithmic}
\end{algorithm}
Claim \ref{mainf} will show that failures within \MainPhase are highly unlikely.
First we show that if it succeeds, it returns a valid partial assignment.

\begin{property} \label{validi}
Each augmentation in \MainPhase Line \ref{augmentation} is valid.
\end{property}

\begin{proof}
For the first augmentation in a round, this has already been 
justified by Property \ref{valid1}.
To justify a later augmentation, we will show that
none of its indices has appeared, in the same coordinate,
in any previous augmentation within the round.
Consider first a value $u$ that appears in a positive triple 
in an augmentation $\F$ and a previous augmentation $\F'$.
Bear in mind that the sets $\SS \ell m$ ($\ell=1,\ldots,\ddp$),
the output of \SetConstruct, are fixed for the round.

Consider first $u$ drawn from $\SS \ell m$ for augmentation $\F$,
with $\ell \leq \dd$.
By an argument like that used in the proof of Property \ref{setprop},
within $\F'$, $u$ must have been drawn from the same set $\SS \ell m$:
these sets are disjoint from one another, 
and disjoint from each set $\SS \ddp m$.
But this is a contradiction, since use of $\F'$ would have poisoned
$u \in \SS \ell m$, and $\F$ uses only healthy elements.

Next consider $u$ drawn from $\SS \ddp {2m-1}$ for $\F$,
making $u$ a 2-index in $\F$.
Again, by disjointness of the various sets,
as a 2-index in $\F'$, 
$u$ can only have been drawn from the same set $\SS \ddp {2m-1}$.
(The only other place $u$ can appear is in a set $\SS \ddp {2m'}$,
but that would make it a 3-index.)
This is a contradiction, since use of $\F'$ would have poisoned
$u \in \SS \ddp m$, and $\F$ uses only healthy elements.

The same argument goes for $u$ drawn from $\SS \ddp {2m}$ for $\F$.

Finally, consider the 1-index $i$ itself
whose assignment is the purpose of the augmentation $\F$.
It cannot appear as a 1-index in any earlier or later augmentation $\F'$:
it does not have the same unique ``root'' role
for any other augmentation $\F'$,
it cannot have been picked from any set $\SS \ell m$ with $\ell \leq \dd$ 
since these are subsets of $A$ while $i \notin A$,
and if picked from any set $\SS \ddp m$ it is a 2- or 3-index not a 1-index.

We now consider a conflict involving a negative triple $\triple u$ in $\F$,
implying that $u$ appears as a 1-coordinate (and not $i$) 
in a positive triple in $\F$.
Such a conflict with another augmentation $\F'$ 
cannot involve a negative triple
(since then $u$ would appear also as a 1-coordinate in $\F'$,
which we have already excluded),
so in $\F'$, $u$ must appear in a positive triple.
It cannot appear as a 1-index (already excluded).
The the only 2- and 3-indices in $\F'$ that are not also 1-indices
are those drawn from sets $\SS \ddp m$,
implying $u \in \bB = \bC$
and thus $u \notin A$,
contradicting the appearance of $u$ as a 1-index in $\F$.
\end{proof}

This concludes a proof of correctness of \MainPhase.
It remains to show that 
the various failure conditions are unlikely, notably that
the sets $J$ and $K$,
initially and when restricted to their healthy elements later,
are sufficiently large that failure in 
\MainPhase Line \ref{maintest} is unlikely.
To that end we first estimate the sizes of all the sets at the beginning of a round and
then analyze their losses due to poisoning.

We start by showing that the following ``nominal sizes''
are good estimates of the sizes of the sets $\SS \relax \relax$ at corresponding depths:

\begin{align}
\left.
\begin{aligned}
 \sz_3 &= \floor{x/4} = \Theta(x) \\ 
 \sz_2 &= \pp \xcard (\sz_3)^2 = \Theta( \pp n x^2) = \Theta( n^{1/7} x^{6/7}\log n) 
  \text{, and}
  \\
 \sz_1 &= \pp \xcard (\sz_2)^2 = \Theta( n^{3/7} x^{4/7}\log^2n) .
\end{aligned}
\quad \right\} 
  \label{nominalsizes}
\end{align}
Let
\beq{errdef}
\err=n^{-1/100}.
\eeq
We know that the sets $\SS 3 1,\ldots,\SS 3 8$
have size precisely $\sz_3$, and
we will show that \qs%
\footnote{A sequence of events 
$\cE_n,n\geq 0$ is said to occur \emph{quite surely} (\qs)
if $\Pr(\cE_n)=1-O(n^{-C})$ for every positive constant $C$.}\
the sets $\SS 2 1,\ldots,\SS 2 4$
all have cardinality $(1 \pm \err) \sz_2$,
and $\SS 1 1$ and $\SS 1 2$ 
have cardinality $(1 \pm \err)^2 \sz_1$,
where the notation
$A=(1\pm\err)^r B$ is shorthand for $(1-\err)^r B \leq A \leq (1+\err)^r B$.
This is part of the following more detailed claim,
for which we need one definition.

\begin{definition}
\label{subtriples}
For sets $X, S', S''$, and $\outtrip S = \Triples(X, S', S'')$,
let $\Sstar(\cdot,s',\cdot) \subseteq \Sstar$ 
be those elements generated by $s' \in S'$ 
(that is, having $s'$ as their second coordinate).
Symmetrically, 
let $\Sstar(\cdot,\cdot,s'') \subseteq \Sstar$ 
be those elements generated by $s'' \in S''$.
\end{definition}

\begin{claim}\label{cl2}
For each depth $\ell \in \set{1,\ldots,\kp}$ and each $m \in \set{1,\ldots,2^l}$:
\begin{enumerate}
\item
The nominal sizes in \eqref{nominalsizes}
satisfy $\sz_1 > \sz_2 > \sz_3 = \Theta(x)$.
\item
With $\d$ as in \eqref{errdef}, \qs,
$\card{\SS \ell m} = \ratr^{2^{\ddp-\ell}-1} \sz_\ell$.
\item
For every pair of sets $S' = \SS \ell {2m-1}$, $S''= \SS \ell {2m}$ 
(e.g., $P,Q$),
with $\Sstar = \SSstar{\ell-1}m$
(e.g., $\Jstar$),
\qs\ every $s' \in S'$ gives
$\card{\Sstar(\cdot,s',\cdot)} = \ratr \pp \xcard \card{S''}$
and symmetrically
every $s'' \in S''$ gives
$\card{\Sstar(\cdot,\cdot,s'')} = \ratr \pp \xcard \card{S'}$.
\end{enumerate}
\end{claim}

\begin{proof}
The first assertion is immediate from $n\gg x$ and thus $\pp n' x \gg 1$.%
\footnote{The notation $f(n) \gg g(n)$ is equivalent to $f(n) = \om(g(n))$,
 i.e., $f(n)/g(n) \to \infty$.}
We prove the rest of the claim by induction on $\ell$, 
starting with $\ell=3$ and working backwards to $\ell=1$.
For $\ell=3$, the second statement is immediate.

For any $\ell$, the first two statements imply the third.
To see this, 
Property \ref{tripleprop} implies that given $S''$, distributionally, 
\beq{asd1}
\card{\Sstar(\cdot,s',\cdot)} = \sum_{S''} \Bin(n',\pp)= \ \Bin(\card{S''} \xcard, \pp).
\eeq
To this we apply the Chernoff inequality
\beq{Chernoff}
\Pr( \abs{ \Bin(n,p)-np} \geq \err np) 
  \leq 2e^{-\err^2np/3}
   \qquad \text{for }  0 \leq \err \leq 1
\eeq
(an easy consequence of Alon and Spencer \cite[Theorem A.1.15]{AS}),
substituting $\card {S''} n'$ for $n$, $\pp$ for $p$, and $\d$ for $\d$.
Recalling from \eqref{wdef} that $\pp = n^{-6/7}x^{-8/7}\log n$,
and from \Triples (Algorithm \ref{triples}) that
$n'=n-2n^\nt$,
the expectation of the binomial in \eqref{asd1} is
\begin{align}
\card{S''} \xcard  \pp =\Omega(\s_\ell n\pp)
    = \Omega(x n\pp )
    = \Omega( n^{1/7} x^{-1/7}\log n) 
    = \Omega( n^{1/49}\log n ) .
    \notag 
\end{align}
 From this and \eqref{errdef} it follows that 
$\err^2 \card{S''} n' \pp = \omega(\log n)$,
and thus the exponent in the Chernoff inequality \eqref{Chernoff}
is of order $-\omega(\log n)$.
This assures that for a given $s'$, \qs,
\beq{aq1}
\card{\Sstar(\cdot,s',\cdot)}=(1\pm\err)\card{S''}\r n'.
\eeq
Likewise, for a given $s''$, 
$\card{\Sstar(\cdot,\cdot,s')}=(1\pm\err)\card{S'}\r n'$.
By definition of \qs\ (failure probability smaller than any polynomial),
the union bound shows that \qs\ 
\eqref{aq1} holds for \emph{every} $s'$,
yielding the third part of the claim.

Finally, truth of the second and third parts of the claim for $\ell$
implies that of the second part for $\ell-1$.
 From \eqref{aq1}
it follows that 
\begin{align}
\card{\SS {\ell-1} m}
 = \card{\Sstar} 
  &=\sum_{s'\in S'}\card{\Sstar(\cdot,s',\cdot)}   \nonumber 
 \\ & = \ratr \pp n' \card{S'} \card{S''}    \label{aaq2}
 \\
 &= \ratr \pp n' \card{\SS \ell {2m-1}} \card{\SS \ell {2m}}\nonumber
 \\
 &= \ratr \pp n' ((1 \pm \r)^{2^{\dd-\ell}-1} \sz_\ell)^2
  \quad \text{ by the inductive hypothesis}\nonumber
 \\
 &= \ratr^{2^{\dd-(\ell-1)}-1} \sz_{\ell-1} .\nonumber
\end{align}
\end{proof}

\begin{claim} \label{mainf}
There is \qs\ no failure in \MainPhase 
(Algorithm \ref{mainalg2} Line \ref{maintest}).
\end{claim}
\begin{proof}
Consider how the sizes of the sets $\SS \ell m$ 
are affected by poisoning during a round.
Claim \ref{cl2} part (3) shows that each element $s' \in S'$
has almost exactly equal representation in the parent set $\Sstar$.
Specifically (see also \eqref{aq1} and \eqref{aaq2}), \qs,
every ratio
$\linefrac{\card{ \Sstar(\cdot,s',\cdot)}}{\card \Sstar}$ 
satisfies
$$\frac{\card{ \Sstar(\cdot,s',\cdot)}}{\card \Sstar}
 =\frac{\ratr \card{S''}\r n'}{\ratr \pp n' \card{S'} \card{S''}}
=\frac{\ratr ^2}{\card{S'} }.$$
Thus each poisoned $s'\in S'$ poisons 
$\card{ \Sstar(\cdot,s',\cdot)}
 = \ratr^2{\card \Sstar} / \card{S'}$
triples associated with the parent $\Sstar$.
If $\l \card{S'}$ elements are poisoned (a $\l$ fraction of $S'$),
they poison $\leq \l \ratp^2 \card{\Sstar}$ elements of $\Sstar$
(a $\l$ fraction of $\Sstar$, up to the small error factor);
the inequality allows for possible repeated poisonings of the same elements.

After $\l \sz_3$ iterations within a round, 
a $\l$ fraction of $\SS 3 1 = \Xi_1$ is poisoned.
It therefore poisons a $\leq \l \ratp^2$ fraction of $\SS 2 1$.
These poisoned elements 
in turn poison a $\leq \l \ratp^4$ fraction of $\SS 1 1$.
The same holds for $\SS 3 3, \SS 3 5, \SS 3 7$, 
so, summing, we see that a $\leq 4 \l \ratp^4$ fraction of $\SS 1 1$ is poisoned.
Similarly, a
$\leq 4 \l \ratp^4$ fraction of $\SS 1 2$ is poisoned.

As long as these fractions are both at most $1/2$,
at least half the elements of $J$ are healthy, likewise for $K$.
In this case the number of good triples $(i,j,k)$,
with $i$ given by the round and iteration, 
and $j \in J$ and $k \in K$ both healthy,
dominates $\Bin( \frac14 \s_1^2, \pp)$,
whose expectation is 
$$\frac14 \s_1^2 \; \pp 
 = \Theta\left( (n^{3/7}x^{4/7}\log^2n)^2 \; (n^{-6/7}x^{-8/7} \log n) \right)
 = \Theta(\log^5n) . $$
Thus \qs\ there is some good triple $(i,j,k)$.

The poisoned fractions of $J$ and $K$, 
each $\leq 4 \l \ratp^4$, are $\leq 1/2$ if
$\l \leq  (1 - \err)^4/8$.
Earlier we parametrized the total number of iterations as $\a x$,
so $\l \sz_3 \leq \a x$,
for $\l \leq \a x/\sz_3 < 8 \a$
(the equipartitioning gives $\sz_3 > x/8$).
Taking $\a = (1/64) (1-\err)^4$, we see that the round can \qs\ 
continue even in the iteration after $\a x$.
We thus perform $\ceil{\a x}$ iterations within the round,
all succeeding \qs 
\end{proof}

\begin{claim} \label{tripf}
There is \qs\ no failure in \Triples 
(Algorithm \ref{triples}), 
in Lines \ref{tripf1} or \ref{tripf2}.
\end{claim}

\begin{proof}
Recall that \Triples is called by \SetConstruct (Algorithm \ref{SetConstruct}),
in turn called by \MainPhase Line \ref{MainSetConstruct}.
 From Claim \ref{cl2} and \eqref{nominalsizes},
the total size of all sets $\SS  1 1, \ldots, \SS 3 8$
is dominated by the sizes of $J = \SS 1 1$ and $K = \SS 1 2$,
and \qs\ is $O(\s_1) = O(n^{3/7} x^{4/7} \log^2 n)$.
Recalling that $x=x_t \leq x_1 = n^{6/7}$,
this is $O(n^{46/49})$.

Within \SetConstruct, at the start of round $t$ we have
$\card X = \card A = n-x_t$;
this is smallest for round 1, where $\card A = n-x_1 \geq n-n^{6/7}$.
$X$ is depleted precisely of the elements 
that \Triples adds to the sets $\SS \ell m$,
so by the paragraph above, even at the end of \SetConstruct, \qs\ 
$\card X 
  \geq n-n^{6/7}-O(n^{46/49})
  > n-n^\nt
$.
Thus Line \ref{tripf1} of \Triples \qs\ never fails.
Also, Line \ref{tripf2} fails iff the size of $U$ reaches $n^\nt$,
but again $U$ is one of the sets $\SS \ell m$,
and the first paragraph shows that its size is \qs\ smaller than this.

A union bound ensures that \qs\ there is no failure
within the polynomial-time execution of \MainPhase.
\end{proof}

\subsection{Final Phase}\label{FP}
We now have to add only \fourm indices to $I$.
At this point, when the number of unused indices is
$\card \bB = \card \bC = x<\four$, 
the previous approach does not work:
to begin with, there are not even enough unused indices
to define nonempty sets $\Xi_1,\ldots,\Xi_8$.
In the extreme case $x=1$, only $\xx$ is available to fill the
roles of $\x_1,\ldots,\x_7$ and $\x_2,\ldots,\x_8$,
but (within each of the two groups) we relied on all these values being distinct
to obtain a valid assignment
(see Figure \ref{treepic}).
We now show how to modify our approach in order to deal with 
this problem. 

Because there are only $O(1)$ indices to add, 
we can now afford to have a round consist of the addition of a single triple. 
It is easy to check that $W_\t = O(n^{-6/7} \log n)$,
so for the remaining rounds $t$
(namely $t=\t+1$, $\t+2$, and $\t+3$),
in lieu of \eqref{wdef} we may as well let 
\begin{align} 
\pp_t &= \pp =n^{-6/7} \log n , 
\label{rhofinal2}
\end{align}
defining $w_t$ and $W_t$ as in \eqref{wdef}.
Then, if successful, the cost of this phase is
$O(W_{\tau}+w_\tau)=O(n^{-6/7}\log n)$.

As usual, by permuting the cost matrix we can assume that
$T = \set{\triple i \colon i \in A}$, so $A=B=C$.
For notational convenience we further assume that $A=[n-1]$.
We can do so because, when adding one more element to $T$,
we can confine ourselves (in the algorithm and the analysis)
to using indices at most $\card A+1$;
when $x=3$, for example, this is like ``pretending'' that $n$ is $n-2$
and $x$ is $1$.

Consider the following relabeling of Figure \ref{treepic}. This indicates our strategy for completing
the assignment.
It is easy to check that if $n,j,\ldots,s$ are distinct then this
is a valid augmentation,
so the task is to find such an augmentation of low cost.

\begin{figure}[H] 
\centering
  \begin{tikzpicture}[scale=0.6,thick]
   \tikzstyle{vertex}=
     [rounded corners,fill=blue!10,inner sep=1ex,outer sep=0.5ex]
   \tikzstyle{edge}=[->,>=stealth']
   \tikzstyle{antiedge}=[decorate,decoration=zigzag,->,>=stealth',]

    \node [vertex] (a1) at (0,0) {$+(n,j,k)$};
   
    \node [vertex] (b1) at (-4,-3) {$-(j,j,j)$};
    \node [vertex] (b2) at (+4,-3) {$-(k,k,k)$};
    \draw [antiedge] (a1)--(b1);
    \draw [antiedge] (a1)--(b2);

    \node [vertex] (c1) at (-4,-6) {$+(j,p,q)$};
    \node [vertex] (c2) at (+4,-6) {$+(k,r,s)$};
    \draw [edge] (b1)--(c1);
    \draw [edge] (b2)--(c2);

    \node [vertex] (d1) at (-6,-9) {$-(p,p,p)$};
    \node [vertex] (d2) at (-2,-9) {$-(q,q,q)$};
    \node [vertex] (d3) at (+2,-9) {$-(r,r,r)$};
    \node [vertex] (d4) at (+6,-9) {$-(s,s,s)$};
    \draw [antiedge] (c1)--(d1);
    \draw [antiedge] (c1)--(d2);
    \draw [antiedge] (c2)--(d3);
    \draw [antiedge] (c2)--(d4);

    \node [vertex] (e1) at (-6,-12) {$+(p,n,p)$};
    \node [vertex] (e2) at (-2,-12) {$+(q,q,j)$};
    \node [vertex] (e3) at (+2,-12) {$+(r,k,r)$};
    \node [vertex] (e4) at (+6,-12) {$+(s,s,n)$};
    \draw [edge] (d1)--(e1);
    \draw [edge] (d2)--(e2);
    \draw [edge] (d3)--(e3);
    \draw [edge] (d4)--(e4);

   \end{tikzpicture}
  \caption{\label{treepica}}
\end{figure}
It will be convenient for this and the corresponding general case to equipartition $[n-1]$ into sets $N_{\ell,m}$ of size 
$n_0=n/(2^{\ddp}-2)$ for $1\leq \ell\leq \dd$ and $1\leq m\leq 2^\ell$
(6 sets in this $\dd=2$ case).
Drawing our indices (in this case $j,\ldots,s$)
from corresponding sets ensures distinctness.
Here all sizes are large and we will ignore integrality.
\begin{algorithm}[H]
\caption{\FinalPhase: augment the assignment by one}
\label{FinalPhase2}
\begin{algorithmic}[1]
\STATE Let $P = \set{p \in N_{2,1} \colon (p,n,p) \in G_t}$
\FOR{$j\in N_{1,1}$}
\STATE Let $Q(j)=\set{q \in N_{2,2} \colon (q,q,j) \in G_t}$
\ENDFOR
\STATE Let $J=\set{j\in N_{1,1}\colon (\exists p\in P,q\in Q(j)) \text{ with }(j,p,q)\in G_t}$  \label{jgood}

\vspace{0.2cm} 
\STATE Let $S= \set{s \in N_{2,4}\colon (s,s,n) \in G_t}$
\FOR{$k\in N_{1,2}$}
\STATE Let $R(k)=\set{r \in N_{2,3} \colon (r,k,r) \in G_t}$
\ENDFOR
\STATE Let $K =\set{k\in N_{1,2}\colon (\exists r\in R(k),s\in S) \text{ with }(k,r,s)\in G_t}$      \label{kgood}

\vspace{0.2cm} 
\FAILIF{$G_t \cap (\set{n} \times J \times K)) = \emptyset$}  \label{final2fail}
\STATE  \label{final2valid}
  Let $(n,j,k)$ be an element in this set.
  Let $p,q$ be values satisfying Line \ref{jgood} for $j$,
  and let $r,s$ be values satisfying Line \ref{kgood} for $k$.
\STATE
 Augment the assignment (as in Figure \ref{treepica}), adding 
 $+(n,j,k)$, $+(j,p,q)$, $+(k,r,s)$,
     $+(p,n,p)$, $+(q,q,j)$, $+(r,k,r)$, $+(s,s,n)$ 
 and removing 
      $-(j,j,j)$, \ldots, $-(s,s,s)$.
\end{algorithmic}
\end{algorithm}

\begin{claim} \label{final2f}
There is \qs\ no failure in \FinalPhase 
(Algorithm \ref{FinalPhase2})
in Line \ref{final2fail}.
\end{claim}

\begin{proof}
Observe first that $|P|$ is distributed as $\Bin(n_0,\pp)$. 
This has expectation $\Omega(n^{1/7}\log n)$ and so 
the Chernoff bounds imply that \qs\ $|P|=\ratr n_0\pp$, with $\err$ as in \eqref{errdef}.
Given $P$ and $j\in N_{1,1}$, 
the size of $Q(j)$ is distributed as $\Bin(n_0,\pp)$ 
and so \qs\ $|Q(j)|= \ratr n_0\pp$. 
The
trials for choosing a $Q(j)$ are independent of those for choosing $P$
as they use distinct first indices
(drawn respectively from $N_{2,2}$ and $N_{2,1}$).
The trials for the various $Q(j)$ are all independent
as they use distinct last indices.
Condition on the sets $P$
and $Q(j)$ for $j\in N_{1,1}$ and that they are as large as \qs\ claimed.

The trials to determine if $j \in N_{1,1}$ belongs to $J$ 
are the only trials to look at matrix elements with first index $j$,
so they are independent of the trials for $P$, for every $Q(j)$,
and for every other $j'$.
Fix $j\in N_{1,1}$ and let $\a_j$ be the (conditional) probability that $j\in J$. Then
$$\a_j
 =1-(1-\pp)^{|P|\,|Q(j)|}
 \geq \frac23 |P|\,|Q(j)|\pp
 \geq\frac12n_0^2\pp^3.$$
Because the trials for each $j \in N_{1,1}$ are independent,
$|J|$ dominates $\Bin(n_0,n_0^2\pp^3/2)$ and \qs\
this is at least $n_0^3\pp^3/3$. 
Conditioning on all the foregoing, 
the same analysis shows that \qs\ $|K|\geq n_0^3\pp^3/3$. Given this, 
and recalling $\pp$ from \eqref{rhofinal2},
the probability that no $(n,j,k)$ is good is
at most 
\begin{align*}
(1-\pp)^{n_0^6\pp^6/9}
 & \leq \exp(-\pp n_0^6\pp^6/9)
 = \exp(-\Theta( n^6 \pp^7 ))
 = \exp(-\Theta( \log^7 n))
.
\end{align*}
\end{proof}

This completes the analysis of \BDTS\ when there are two levels, \BDTSx 2.

\section{3-Dimensional Axial assignment general algorithm, \BDTSx d} 
\label{MG}
We follow the same three-phase strategy as for \BDTSx 2
but with depth
$\dd$ a positive integer satisfying
\begin{align}
2 \leq \dd\leq \e\log_2\log n
 \quad \text{ where } \quad
0<\e<1/2 .
\label{grandparams}
\end{align}
For the Greedy and Main Phases
the ideas are the same as before;
only the calculations are more difficult.
For the Final Phase, new ideas are needed to generalize the
construction illustrated in Figure \ref{treepica}.

\subsection{Greedy Phase}  \label{greedy general}
This is much as before.
Proceed as in Section \ref{G1}
but take 
\begin{gather*}
\tk = \tk_\dd=\frac{1}{2^\ddp-1} 
 , \qquad
n_1 = n-n^{1-\tk}
 , \qquad
w_0 = n^{-2(1-{\tk})}\log n .
\end{gather*}
Lemma \ref{lem1} continues to hold.

\subsection{Main Phase: parameters and cost bound}\label{MP1}
In parallel with Section \ref{MP0}, let
\begin{align}
 \label{parameters}
\left.
\begin{gathered}
\a = \tfrac{1}{10} 2^{-2\dd} 
 , \qquad
\b = 1-\a,
\\ 
x_1 = \floor{ n^{1-\tk} } , \qquad
\quad
x_t 
 = \floor{ \b x_{t-1} } \text{ for $t \geq 2$,} 
\\ 
 \qquad
\pp_t = x_t^{-1-\tk}n^{\tk-1} \log n,\text{ for $t \geq 1$,}
\\ 
\tau = \argmax_t \left\{ x_t \geq 2^\dd \right\}
 ,\qquad 
 w_t = -\log(1-\pp_t), \quad
W_t = w_0+w_1+\cdots +w_t .
\end{gathered}
\quad \right\}
\end{align}
Note that
\begin{align*}
x_t \leq \b^{t-1}x_1
 ,\qquad 
\tau \leq \log_{1/\b}(x_1 / 2^\dd)=O(\log^2n)
 ,\quad \text{and } \quad
w_t = (1+o(1)) \pp_t .
\end{align*}

As before let $T_t$ be the partial matching (set of triples)
at the start of round $t$ of the Main Phase, 
let $A_t = \proj_1(T_t)$ be the set of 1-indices assigned
and $\bA_t$ those not assigned, similarly for $B$ and $C$,
and again for notational convenience
assume that $T_t = \triple 1,\ldots,\triple{n-x_t}$.
Round $t$ consists of $\ceil {\a x_t}$ iterations,
each increasing the cardinality of the partial assignment by 1,
so that the round reduces the number of unassigned elements 
from $x_t$ to $\floor{\b x_t} = x_{t+1}$.
As before,
the small size of $\a$ means that the last several rounds each
each run for a single iteration and thus the last phase will have
$x_\t = 2^\dd$ exactly.
In analogy with \eqref{add}, each augmentation
will add $2^\ddp-1$ triples each of cost at most $W_t$,
and remove $2^\ddp-2$ triples.

Altogether the Main Phase increases the partial assignment's cost by at most
\begin{align*} 
(2^\ddp-1)
\sum_{t=1}^{\tau}(x_t-x_{t+1})W_t
 & \leq 
   2^\dd   \parens{ x_1w_0+\sum_{t=1}^{\tau}x_tw_t }
 \\ & = 
   O(2^\dd) \parens{
  n^{{\tk}-1} \log n +
   n^{\tk-1} \log n \sum_{t=1}^{\tau} x_t^{-\tk} 
   } .
\\ & = O(2^{4\dd} n^{\tk-1} \log n) .
\end{align*}
The last line above follows 
in precise analogy to \eqref{mainsum2},
using
\begin{align}
\sum_{t=1}^{\t} (x_t)^{-\tk}
 \leq \sum_{t=0}^{\t-1} (\b^t x_1)^{-\tk}
 \leq \frac{ ((1/\b)^\t)/x_1)^\tk} {1-\b^\tk}
 \leq \frac{(2^{-\dd})^\tk} {1-\b^\tk}
 \leq O(2^{3d}),
 \label{mainsum}
\end{align}
whose last inequality comes from
$\dd \, \tk = \Theta(1)$
and $\b^{\tk} = (1-\a)^\tk = 1 - \Omega(\a \tk) = 1-\Omega(2^{-3\dd})$.

\subsection{Main Phase: algorithms and properties}\label{MP2}

The algorithm follows lines that we hope are clear from the $\dd=2$ case,
with only small generalizations needed.

\mainalgsec{Property \ref{goodset}} holds just as before.

\mainalgsec{Algorithm \ref{triples}}, \Triples,
constructing a set $U$ of triples from 
two child sets $V,W$ of triples and a set $X$ of available 1-indices,
is as before with the sole exception that we replace
$n^\nt$ with $n^{1-\tk/10}$

\mainalgsec{Property \ref{tripleprop}} holds for the same reason as before.

\mainalgsec{Algorithm \ref{SetConstruct}}, \SetConstruct, 
is a trivial extension of its $\dd=2$ special case.
It is given as Algorithm \ref{SetConstruct} below,
and constructs the obvious generalization of Figure \ref{figfig2}.

\begin{algorithm}[H]
\caption{\SetConstruct: for Main Phase round $t$, construct all sets $\SS \ell m$}
\label{mainalg1g}
\begin{algorithmic}[1]
\STATE Input: sets $A$, $B$, and $C$ of assigned 1-, 2-, and 3-indices. 
\STATE Let $X = A$ and $x=n-\card X$.
\STATE ``Equipartition'' $\bB$ into 
 $\SS \ddp 1 \cup\SS \ddp 3 \cup\cdots\cup \SS \ddp {2^\ddp-1}$ 
  each of size $\floor{x/2^\dd}$, 
  discarding the $x-2^\dd\floor{x/2^\dd}$ remaining elements of $\bB$
\STATE Likewise, equipartition
  $\bC$ into 
 $\SS \ddp 2 \cup\SS \ddp 4 \cup\cdots\cup \SS \ddp {2^\ddp}$ 
\FOR {$\ell = \dd,\ldots,1$}
  \FOR {$m = 1, \ldots, 2^\ell$}
     \STATE Let $\outtripp \ell m = 
          \Triples(X, \SS \ellp {2m-1} , \SS \ellp {2m}) $
  \ENDFOR
\ENDFOR
\STATE Return all sets $\SS \ell m$ and $\SSstar \ell m$ defined above
\end{algorithmic}
\end{algorithm}

\mainalgsec{Algorithm \ref{MakeTree}}, \MakeTree, 
is as shown earlier except that $\ell$ ranges from 1 to $\dd$,
and of course the names $J$ and $K$ are to be disregarded.

We will use an augmentation output by \MakeTree
just as before, adding all positive triples to the assignment
and removing all negative triples from it.
Instead of adding 7 triples and removing 6, 
as in \eqref{add} for the $\dd=2$ special case,
we now add $2^\ddp-1$ triples and subtract $2^\ddp-2$.

\mainalgsec{Properties \ref{setprop} and \ref{valid1}} 
hold for the same reasons as before.

\mainalgsec{Algorithm \ref{poisonPropagate}}, \PoisonPropagate,
again has no changes 
except that in Line \ref{poison1} 
the poisoned leaves are $\ss \ddp 1, \ldots, \ss \ddp {2^\ddp}$,
and that Line \ref{poison2} iterates through $\ell=\dd$ to 1.

\mainalgsec{Algorithm \ref{mainalg2}}, \MainPhase 
is unchanged but for
interpreting $J$ as $\SS 1 1$ and $K$ as $\SS 1 2$,
and likewise their starred variants.

\mainalgsec{Property \ref{validi}} holds for the same reason as before.

This establishes that \MainPhase is correct, 
and we need only show that \qs\ it does not fail. 
To this end we will prove generalizations of 
Claims \ref{cl2}, \ref{mainf}, and \ref{tripf},
starting with Claim \ref{cl2}'s determination of the sizes of the
sets $\SS \ell m$.

Letting 
$n'=n-2n^{1-\tk/10}$,
in analogy with \eqref{nominalsizes} we define
nominal sizes of level-$\ell$ sets by
$\sigma_\ddp = \floor{x/2^\dd}$ 
and, for $\dd \geq \ell \geq 1$,
\begin{align}
\sigma_\ell &= \pp n' \sigma_\ellp^2 ,  \label{recurrence}
\intertext{a recurrence leading easily to}
 \sigma_{\ell} &= (\pp n')^{2^\kpell-1} (\sigma_\ddp)^{2^\kpell} 
 = \frac1{\pp n'} (\pp n' \sigma_\ddp)^{2^\kpell}  
 . \label{recurrencesolved}
\end{align}

As in the depth-2 case, we will show that the sizes of each set $\SS \ell m$
is approximated by the nominal size $\sz_\ell$ up to a factor
$\ratr^{2^{d+1-\ell}}$,
with 
\beq{deltag}
\d = n^{-\tk^2/3}. 
\eeq
We note that these bounds are good, by assertion 0 of the following claim.

\begin{claim}  \label{sizeclaim}
For each depth $\ell \in \set{1,\ldots,\kp}$ and each $m \in \set{1,\ldots,2^l}$:
\begin{enumerate}
\setcounter{enumi}{-1}
\item
For $\d$ as defined in \eqref{deltag},
$\ratr^{2^{\dd}} = 1+o(1) .$%

\item
The nominal sizes in \eqref{recurrencesolved} satisfy 
$\sz_1 \gg \cdots \gg \sz_\kp = \Theta(x/2^\dd)$.

\item
\Qs,
$\card{\SS \ell m} = \ratr^{2^{\ddp-\ell}-1} \sz_\ell$.

\item
For every pair of sets $S' = \SS \ell {2m-1}$, $S''= \SS \ell {2m}$,
with $\Sstar = \SSstar{\ell-1}m$,
\qs\ every $s' \in S'$ gives
$\card{\Sstar(\cdot,s',\cdot)} = \ratr \pp \xcard \card{S''}$
and symmetrically
every $s'' \in S''$ gives
$\card{\Sstar(\cdot,\cdot,s'')} = \ratr \pp \xcard \card{S'}$.
\end{enumerate}
\end{claim}

\begin{proof}
The proof mirrors that of Claim \ref{cl2} but with more computation.
We start with some observations we will use repeatedly.
By definition,
$$\tk=\Theta(2^{-d})=\Omega(\log^{-e}n) . $$
Then
\begin{align} \label{neta}
 n^{\tk^2} 
   = \exp(\log n \, \Omega(\log^{-2\e}n) ) 
   = \exp(\Omega(\log^{1-2\e}n)) .
\end{align}
(This is the reason we require $\e<1/2$.)

We first prove part 0 (which was not worth stating for the $d=2$ case).
First,
\begin{align*}
 (1+\d)^{2^\kp} 
   & \leq \exp(\d 2^\kp) 
   = \exp( n^{-\tk^2/3} \Theta(1/\tk) ) .
\end{align*}
By \eqref{neta}, the argument of the exponential satisfies
\begin{align}
  n^{-\tk^2/3} \Theta(1/\tk) 
   &= \exp(\Omega(\log^{1-2\e}n)) O(\log^\e n)
   = o(1),  \label{netadom}
\end{align}
where the last step uses $\e<1/2$ and follows from the general observation
that the exponential of a logarithm to any positive power
dominates any polylogarithmic quantity.
(Taking logarithms, $\log$ to any positive power dominates $\log \log$.)
Thus,
$$ (1+\d)^{2^\kp} = \exp(o(1)) = 1+o(1) , $$
establishing assertion 0.

The first assertion's last equality follows from $x \geq 2^\dd$
(so that $\floor{x/2^\dd}$ is not 0).
Proving the rest of the first assertion inductively on $\ell=\dd,\ldots,1$,
suppose that $\sz_\ellp \geq \sz_\ddp$; the base case $\ell=d$ is trivial.
Then, recalling that $x \leq n^{1-\tk}$,
we see that
\begin{multline}  \label{rhon}
  \pp n' \sz_\ellp
   \geq \pp n' \sz_\ddp
   = \Omega(\pp n x \, 2^{-\dd})
   = \Omega( (n/x)^\tk \log n \, \log^{-\e}n)
   = \Omega( n^{\tk^2} \log^{1/2} n)
   \\
   = \exp(\Omega(\log^{1-2\e}n)) \log^{1/2} n 
   = \omega(1),
\end{multline}
where the last line makes use of \eqref{neta} and the previous observation
on domination.
 From \eqref{rhon} and \eqref{recurrence}, 
$
 \pp n' {\sz_\ellp}^2
  = (\pp n' \sz_\ellp) \sz_\ellp
  \gg \sz_\ellp
$,
establishing the first assertion.

We prove the rest of the claim by induction on $\ell$, 
starting with $\ell=\ddp$ and working backwards to $\ell=1$.
We equate 
$S' = \SS \ell {2m-1}$, $S''= \SS \ell {2m}$,
and $\Sstar = \SSstar{\ell-1}m$.
For $\ell=\ddp$, the second statement is immediate.
For any $\ell>1$, the first two statements imply the third
(which is empty when $\ell=1$).
To see this,
Property \ref{tripleprop} implies that given $S''$, distributionally, 
\beq{asd1b}
\card{\Sstar(\cdot,s',\cdot)} = \sum_{S''} \Bin(n',\pp)= \ \Bin(\card{S''} \xcard, \pp).
\eeq
We wish to apply the Chernoff bound to this.
 From assertion~0 and the inductive hypothesis,
$\card{S''} = (1+o(1)) \sz_\ellp = \Omega{\sz_\ddp} , $
and thus the Chernoff bound is exponentially small in 
\begin{align*}
 \d^2 \card{S''} n' \pp
  & \geq (n^{-\tk^2/3})^2 (1+o(1)) \Theta(2^{-\dd} x) n (x^{-1-\tk} n^{\tk-1})
  \\& = (n^{-2 \tk^2/3}) \Theta(\tk) (n/x)^{\tk}
\quad \text{ which, reasoning as in \eqref{rhon}, is}
  \\& = \Omega (n^{+\tk^2/3} \tk) 
  = \exp(\tfrac12 \Omega(\log^{1-2\e}n)) \, \Omega(\log^{-\e} n)
  = \omega(\log n)
.
\end{align*}
Thus the Chernoff bound
is smaller than any polynomial,
so the binomial's value is \qs\ almost exactly 
equal to its expectation.
Thus, \qs\ for all $s' \in S'$,
$
 \card{\Sstar(\cdot,s',\cdot)} = \rate \card{S''} n' \pp ,
$
with the symmetric statement holding for all $s'' \in S''$.
This gives the claim's third assertion for $\ell$.

Then, simply by summing over all $s' \in S'$,
$
 \card{\Sstar} = \rate \card{S'} \card{S''} n' \pp .
$
By the inductive hypothesis,
$$
 \card{\Sstar} = \rate \left[\rate^{2^{(\kp)-(\ell)}-1} \sz_\ell\right]^2 n' \pp
               = \rate^{2^{\kp-(\ell-1)}-1} \sz_{\ell-1} .
$$
This establishes the claim's second assertion for $\ell-1$,
concluding the induction and the proof.
\end{proof}

The next claim parallels Claim \ref{mainf}
and is proved by identical reasoning.

\begin{claim} \label{mainfg}
There is \qs\ no failure in \MainPhase.
\end{claim}

\begin{proof}
Claim \ref{sizeclaim}'s first assertion
establishes the sizes of the sets $\SS 1 1$ and $\SS 1 2$,
and we now consider the sizes of their healthy subsets.
After $\l \sz_\ddp$ iterations within a round, 
by definition a $\l$ fraction of $\SS \ddp 1$ is poisoned.
Reasoning as in the proof of Claim \ref{mainf},
Claim \ref{sizeclaim}'s second assertion
says that each element in the child set
is responsible for an equal number of elements of the parent,
to within a factor $\rate^2$,
and thus the poisoned $\l$ fraction of $\SS \ddp 1$
poisons a fraction $\rate^2 \l$ of $\SS \dd 1$.
This in turn poisons a fraction $\rate^4 \l$ of $\SS {\dd-1} 1$
and so on, 
finally poisoning a fraction $\rate^{2\dd}$ of $\SS 1 1$.
Each leaf set $\SS \ddp m$ poisons this same fraction of $\SS 1 1$,
so altogether, the fraction of $\SS 1 1$ poisoned is at most
$2^{\dd} (1+\d)^{2\dd} \l$.
(Half the leaves lead to $\SS 1 1$; the other half act on $\SS 1 2$.)

We wish to ensure that at most half of $\SS 1 1$ is poisoned.
This will be so if
$2^{\dd} (1+\d)^{2\dd} \l \leq 1/2$.
Claim \ref{sizeclaim}'s assertion~0 tells us that 
$(1+\d)^{2d} = 1+o(1)$,
so it suffices to ensure that $\l \leq \tfrac13 \, 2^{-\dd}$.
The round runs for $\ceil{\a x}$ iterations
and we are not interested in the state after the last iteration,
so $\l \sz_\ddp \leq \floor{\a x} \leq \a x$.
Thus it suffices to take 
$\a x/\sz_\ddp \leq \tfrac13 \, 2^{-\dd}$,
or equivalently
$ \a 
  \leq (\sz_\ddp/x) \, \tfrac13 \, 2^{-\dd} 
$.
The right-hand side of this inequality is equal to
$ (\floor{x/2^\dd}/x) \, \tfrac13 \, 2^{-\dd} 
  \geq (\tfrac12 2^{-\dd}) \, (\tfrac13 \, 2^{-\dd})
  = \tfrac1{6} \, 2^{-2\dd} 
$,
so it suffices to ensure that $\a \leq \tfrac16 2^{-2\dd}$;
this is done by its definition in \eqref{parameters}.

 From Claim \ref{sizeclaim}'s assertion 2 and then assertion 0,
$\card{\SS 1 1},\card{\SS 1 2} = \ratr^{2^\ddp} \sz_1 = (1+o(1)) \sz_1$,
so the product of the sizes of their healthy subsets is 
$\geq \frac15 {\sz_1}^2$, and
$\card{ G_t \cap \set{i} \times \SS 1 1 \times \SS 1 2 }
 \sgeq \Bin({{\sz_1}^2}/5, \pp)$, where the symbol $\sgeq$ denotes stochastic domination.
By Chernoff, this is \qs\ nonzero if the 
expected value, $\pp {\sz_1}^2 /5$, is $\om(\log n)$, as we now verify.

First, recalling that $2^d = \Theta(1/\tk)$, we note that 
$$(n'/n)^{2^\dd} 
  = (1-n^{-\tk/10})^{\Theta(1/\tk)}
  = 1- \Theta(  n^{-\tk/10} \cdot 1/\tk) )
  = 1-o(1), 
$$
where the last step uses 
$$
  n^{-\tk/10} \cdot 1/\tk
   = \exp(-\log^{1-\e}n) O(\log^\e n) = o(1) ,
$$
as comes from reasoning like that for \eqref{neta} and \eqref{netadom}.
By \eqref{recurrencesolved}
and using $\sz_{\dd+1} > \tfrac12 x/2^\dd$,
the expectation $\pp {\sz_1}^2 /5$
is at least one fifth of
\begin{align*}
\pp \frac{1}{(\pp n')^2} \, (\pp n' \s_\kp)^{2^\kp}
&\geq \frac{1}{n^2\pp}\bfrac{\pp n x}{2^\ddp}^{2^\kp}\\
&= \bfrac{x}{n}^{1+\tk}\frac{1}{\log n}\brac{\bfrac{n}{x}^\tk\frac{\log
n}{2^\ddp}}^{2^\kp} 
\\
&=\frac{1}{\log n}\bfrac{\log n}{2^\ddp}^{2^\ddp} 
\quad \text{(using $(2^\ddp) \tk = 1+\tk$)}
\\
&\geq \frac{1}{\log n}\bfrac{\log n}{\log^\e n}^{2^\ddp} 
\\
&\geq (\log n)^{2^\dd-1}.
\end{align*}
That is, the expectation is of order $\omega(\log n)$ for any $d>1$
(quick inspection shows it is also so for $d=1$),
whereupon the Chernoff bound shows that \qs\ every set
${ G_t \cap \set{i} \times \SS 1 1 \times \SS 1 2 }$
is nonempty, and there is no failure in \MainPhase.
\end{proof}

\newcommand{\ntd}{{1-\tk/10}}

The next claim parallels Claim \ref{tripf}
and is proved by identical reasoning.
\begin{claim}
There is \qs\ no failure in \Triples 
(Algorithm \ref{triples}), 
in Lines \ref{tripf1} or \ref{tripf2}.
\end{claim}

\begin{proof}
By analogy with the proof of the $\dd=2$ special case, 
it suffices to show that the sum of the sizes of all sets $\SS \ell m$
produced by \SetConstruct (calling \Triples)
is $o(n^\ntd)$,
so by Claim \ref{sizeclaim} it suffices to show that the sum 
of all the nominal sizes is this small.
 From Claim \ref{sizeclaim}'s assertion~1,
each set's nominal size dominates the sum of the sizes of its two children,
so the sum of all the sets' nominal sizes is less than twice $\sz_1$.
We now verify that this is $\ll n^{1-\tk/10}$.
We use that $2^\dd-1 < 2^\dd-1/2 = 1/(2\tk)$.
First,
$$
\s_1
  =\frac1{\pp n'} (\pp n' \s_{\dd+1})^{2^\dd}
  \leq \frac1{\pp n'} (\pp n x)^{2^\dd}
  =  x(\pp n' x)^{2^\dd-1}
    =x((n/x)^\tk \log n)^{1/(2\tk)}
    =(nx)^{1/2} (\log n)^{1/(2\tk)} .
$$
This is increasing with $x$, and $x \leq x_1 = n^{1-\tk}$, so
$$
\s_1 
 \leq 
 n^{1-\tk/2} (\log n)^{1/(2\tk)}
 = \exp( (1-\tk/2) \log n + {1/(2\tk)} \log \log n) .
$$
Recalling that $\tk=\Omega(\log^{-\e}n)$,
the term $(\tk/2) \log n$ is $\Omega(\log^{1-\e} n)$
while the final term is $O(\log^\e n \, \log \log n)$,
so we conclude that 
\begin{align} \label{sigma1}
\s_1 
 \leq \exp( (1-\tk/3) \log n)
 = n^{1-\tk/3} 
 = o( n^{1-\tk/10} )
 .
\end{align}
\end{proof}

\subsection{Final Phase}
We execute the Main Phase so long as
the number of unassigned elements is at least $2^\dd$.
In the Final Phase, then, we must add $2^\dd-1$ triples,
and as in Section \ref{FP}, 
we will proceed in rounds, 
each round adding just one triple.
In a round, there are $x < 2^\dd$ unassigned elements.
We assume
that the assigned first indices are $A=[n-x]$,
and that the partial assignment consists of the set of triples
$T=\{(i,i,i) \colon i\in A\}$.
We simplify the notation of Section \ref{MP1} and 
let
\begin{gather}
 \label{simpler} 
\pp=n^{\tk-1} \log n 
, \quad 
w = -\log(1-\pp) = \pp (1+o(1)),
\end{gather}
with $\tk = 1/(2^\ddp-1)$ as in Sections \ref{greedy general} and \ref{MP1}.

Because each round adds just one triple, 
if successful, the cost of this phase is 
$$O(2^{\dd}(W_{\tau}+{2^\dd} w))=O({2^{2\dd} n^{\tk-1} \log n}).$$
To see this, the $2^{2\dd}w$ on the left clearly matches the bound.
Recalling $W_\t=w_0 + \sum_{t=1}^{\t} w_t$,
the first term's contribution $2^\dd w_0 = 2^\dd n^{2(\tk-1)} \log n$
is negligible.
Then, 
$\sum_{t=1}^{\t} w_t = n^{\tk-1} \log n \: \sum_{t=1}^{\t} x_t^{-1-\tk}$
is summed as in \eqref{mainsum} to give
$ n^{\tk-1} \log n \: \frac{(2^{-d})^{1+\tk}}{1-\b^{1+\tk}} $.
Still as in \eqref{mainsum},
$(2^{-d})^{1+\tk} = \Theta(2^{-d})$ 
while
$\b^{1+\tk} = 1-\Omega(\a(1+\tk)) = 1-\Omega(\a)$.
So this term, with its $2^d$ prefactor, contributes
$ 2^d n^{\tk-1} \log n \: \frac{O(2^{-d})}{\Omega(2^{-2\dd})} $,
again matching the bound.

A certain function $\f$ will play a key role in defining a general
set of augmenting triples, analogous to the $\dd=2$ case
illustrated in Figure \ref{treepica}.
The function $\f$ may be thought of as taking as input a binary string
(interpretable as a binary number),
along with its length,
and dropping the final all-0 or all-1 block (of maximum length).
That is, we define
\begin{align*}
 \f(\ell, \bitsi \ell) &= (i, \bitsi i)
 \quad \text{where $b_{i} \neq b_{i+1}$ but $b_{i+1}=\cdots=b_\ell$} .
\end{align*}
If $b_1 = \cdots = b_\ell$ then $\f(\ell, \bitsi \ell)=(0,\emptystring)$
where \emptystring denotes the empty string.

Consider a complete binary tree $\cT$ 
on levels $0,\ldots,\dd$,
with nodes $(\ell,m)$ for $\ell=0,\ldots,\dd$ and $m=0,\ldots,2^\ell-1$,
where the children of $(\ell,m)$ are $(\ellp,2m)$ and $(\ellp,2m+1)$.%
\footnote{%
We take the range of $m$ to be $0,\ldots,2^\ell-1$ 
(``index origin'' 0 rather than 1)
because this is notationally better given 
our use of the binary representation of $m$.
}
Make the natural correspondence between a number and its binary
representation, with the empty string $\emptystring$ having value 0.
Then $\f$ establishes a map from the nodes at any level of $\cT$
(notably the leaves) to nodes at a higher level,
\begin{align*}
\f(\ell,m) &= (i,m') ,
\end{align*}
where we interpret $m$ as its $\ell$-bit binary representation $\bitsi \ell$,
and $m'$ is the number with $i$-bit binary representation $\bitsi i$.
For a given level $\ell$, this map from level-$\ell$ nodes to
shallower nodes is injective except for 
the two nodes $(\ell,0)$ and $(\ell,2^\ell-1)$ mapping to the root.
This follows because, except at the root, $\f$ is invertible, with
$\f^{-1}(i,\bitsi i)) 
 = (\ell, (b_1 \ldots b_i (1-b_i) \ldots (1-b_i))$,
where the final binary number has $\ell$ bits.

The mapping $\f$ will be used to define a generic set of triples that
can be used to augment an assignment,
as illustrated in Figure \ref{phi3} for $\dd=3$.
It is analogous to the triples illustrated in Figure \ref{treepica}
for $\dd=2$,
with the indices $1,\ldots,15$ here playing a role
similar to that of the variables $n,j,k,\ldots,s$ there.
Each node $u=(\ell,m)$ of $\cT$ is given a triple 
$$(i_u,j_u,k_u) . $$
The first index $i_u$ is given by
\beq{sigmalm}
i_u \equiv i_{\ell,m} := 2^{\ell}+m
, \quad \text{and for the root,} \quad i_{(0,\$)}=1 .
\eeq
(Any unique labeling will do, though we will capitalize on
the parities of the leaves.)
For a non-leaf $u$ with left child $v$ and right child $w$ 
we complete the triple as $(i_u,i_v,i_w)$.
A leaf $u=(\dd,m)$ with $m$ even
is given triple $(i_u,i_{\f(u)},i_u)$,
and one with $m$ odd
is given triple $(i_u,i_u,i_{\f(u)})$. 
(The role of $\f$ is critical, as we will explain shortly.)

\begin{figure}[H] 
\centering
\begin{tikzpicture}
[level distance=1.0cm,
level 1/.style={sibling distance=8 \mysep},
level 2/.style={sibling distance=4 \mysep},
level 3/.style={sibling distance=2 \mysep},
]
\node{ \textbf{\textit{1}} 2 {3}}
 child {node{2 4 5}
  child {node{4 8 9}
   child {node{8 \textbf{\textit{1}} 8}
   }
   child {node{9 9 4}
   }
  }
  child {node{\textbf{5} {10} {11}}
   child {node{{10} \textbf{5} {10}}
   }
   child {node{11 {11} 2}
   }
  }
 }
 child {node{\textbf{3} 6 7}
  child {node{6 {12} 13}
   child {node{{12} \textbf{3} 12}
   }
   child {node{13 13 6}
   }
  }
  child {node{7 14 15}
   child {node{14 7 14}
   }
   child {node{15 15 \textbf{\textit{1}}}
   }
  }
 }
 ;
\end{tikzpicture}
\caption{For $\dd=3$, construction, based on $\f$, 
of a generic tree of triples $(i,j,k)$ to augment the assignment.
  \label{phi3}}
\end{figure}
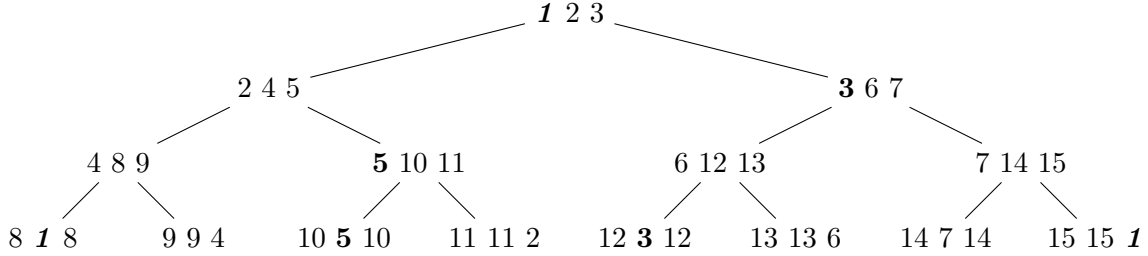

The figure shows in bold font linkages between various leaves and
internal nodes via $\f$.
The first and last leaves, with $m=0$ and $m=7$, 
have binary representations $000$ and $111$,
both mapping to the root, 
and thus their triples both include the root's value $i_u=1$.
The third leaf maps to the second level-2 node, 
since $\f(3,010)=(2,01)$, 
so its triple $(10,5,10)$ includes that node's value $i_u=5$.
Likewise, the fifth leaf maps to the second level-1 node since
$\f(3,100)=(1,1)$,
so its triple $(12,3,12)$ includes that node's value $i_u=3$.

As in the $\dd=2$ special case of the Final Phase (Section \ref{FP}),
to avoid dependencies we equipartition 
the assigned first indices
$[n-x]$ into sets $N_{\ell,m}$ of size 
\begin{align}
n_0 = \frac{n-x}{2^\ddp-2} ,  \label{simpler2}
\end{align}
for $1 \leq \ell \leq \dd$
and $0\leq m\leq 2^{\ell}-1$
(both $x$ and $2^\ddp$ are of order $o(n)$,
so we can and shall ignore integrality),
and we define a singleton set $N_{0,0}=\set{n}$:
we will, without loss of generality, discuss the addition of first index $n$ to the assignment in this round.

We instantiate the generic tree of triples via a mapping
\begin{align}
 \p \colon \set{1,\ldots,2^\ddp-1} \rightarrow \oneto {n} 
 \quad \text{with} \quad
\p(i_{\ell,m}) \in N_{\ell,m} .
 \label{pidef}
\end{align}
The function $\p$ plays the same role as the assignment of 
values to the variables $n,j,k,\ldots,s$ of Figure \ref{treepica},
just in a more general notation.
For example where before we assigned a value to $\ss 1 1=j$,
now we choose a value $\p(i_{1,0})=\p(2)$;
another important example is that
our choice of $N_{0,0}$ implies that $\p(i_{0,0})=\p(1)=n$.
To augment the assignment,
for every triple $(i,j,k)$ in the tree we add
$(\p(i), \p(j), \p(k))$ to the assignment,
and, for every $i$ in $2,3,\ldots,{2^\ddp-1}$,
we delete the old assignment triple $\triple{\p(i)}$.
We will seek a mapping $\p$ such that the added triples
are all of low cost.
We denote the augmentation corresponding to $\p$ by $A_\p$.

\begin{claim}\label{cl100}
The augmentation $A_\p$ described above yields a proper assignment.
\end{claim}
\begin{proof}
Let us ignore $\pi$ for the moment, imagining it to be the identity.
We first show that each of the values $r=1,\ldots,2^\dd-1$
occurs exactly three times in the triples, 
once each as a first, second, and third element.

First consider $r=1$.
This value appears as the first index $i_u=1$ of the root triple, 
the second index in the triple at leaf $(\dd,0)$,
and the third index in the triple at leaf $(\dd,2^\dd-1)$.

Next consider $2^\ell \leq r < 2^{\ellp}$ with $1 \leq \ell < \dd$.
Writing $r=2^\ell+m$,
$r$ occurs as the first index $i_u$ of a node $u=(\ell,m)$
which is neither the root nor a leaf.
(For example, $r=3$ in node $(1,1)$ in Figure \ref{phi3}.)
If $m$ is even, then $u$ is the left child of its parent $v=(\ell-1,m/2)$
and so $r$ appears as the second index $j_v$ of the triple at $v$.
Let $w=(\dd,m')$ be the unique leaf for which $\f(w)=u$. 
The definition of $\f$ implies that $m'$ is odd,
and thus $r = i_u = i_{\f(w)}$ appears 
as the third index $k_w$ of the triple at $w$. 
The case $m$ odd is handled similarly.

Finally, consider $2^\dd \leq r < 2^\ddp$.
Write $r=2^\dd+m$.
Assume first that $m$ is even. 
Then $r$ appears as the first and third indices
of the triple at $(\dd,m)$ and the second index of its parent $(d-1,m/2)$. 
(For example, $r=10$ in the leaf $(3,2)$ and its parent in Figure \ref{phi3}.)
If $m$ is odd then $r$ 
appears as the first and second indices
of the triple at $(\dd,m)$ and the third index of its parent $(d-1,(m-1)/2)$. 

By counting, these three occurrences of each value $r$
account for all elements of all triples, so none occurs 
anywhere other than in the positions described.

Now consider the instantiated triples, of the form
$(\p(i_u), \p(j_u), \p(k_u))$.
By definition \eqref{pidef}, $\p$ is injective,
so each value $\p(r)$ occurs exactly once in each coordinate,
including $\p(1)=n$.
Also by \eqref{pidef}, for each $r=2,\ldots,2^{\dd+1}-1$, 
$\triple{\p(r)}$ was previously in the assignment,
but $\triple{\p(1)}=\triple n$ was not.
Thus, after the augmentation $A_\p$ adds the 
first set of triples and deletes the second,
the assignment is augmented by $\triple n$.
\end{proof}

Recalling that $\p(1)=n$,
our aim now is to find a set of values $\p(2),\ldots,\p(2^{\dd+1}-1)$
respecting \eqref{pidef} and
making all the added triples cheap.
For the subtree $\cT_u$ rooted at $u$,
let $\p'$ be 
a $\p$-like map but on a domain restricted to indices occurring in triples
outside the subtree $\cT_u$.
(Critically, some of these indices may also occur in $\cT_u$.)
We say that an extension to $\cT_u$ 
(possibly a completion to all of $\cT$) $\p$ of $\p'$ is good
if, for every $v \in \cT_u$, $(\p(i_v),\p(j_v),\p(k_v)) \in G$.
(The definition does not consider the cost of triples outside $\cT_u$,
something determined entirely by $\p'$,
though of course we will seek to ensure that they too are of low cost.)
\begin{claim}
\label{goodSubtrees}
For any non-root vertex $u$,
the only indices occurring both in triples in $\cT_u$ and outside it
are $i_u$ (occurring twice inside $\cT_u$ and once outside) 
and $i_{\f(u)}$ (occurring once inside and twice outside). 
Given the cost matrix,
whether $\p'$ has a good extension to $\cT_u$ depends solely on the values of
$\piip u$ and $\piip {\f(u)}$.
\end{claim}

\begin{proof}
We prove the claim by induction on the depth of $u$, 
from the leaves to the root.
We first establish the base cases.
If $u$ is a leaf, 
$i_u$ occurs twice in $u$'s triple 
(thus once outside it),  
and 
$i_{\f(u)}$ occurs once in $u$'s triple (thus twice outside it),
and of course there are no other indices in $\cT_u$.
Since no index occurs exclusively in $\cT_u$, 
$\p=\p'$.
For even $u$, $\p$ is good on $\cT_u$ iff $\ptripp u {\f(u)} u \in G$,
a function of $\piip u$ and $\piip {\f(u)}$ alone;
for odd $u$ a similar argument applies.

Now consider $u$ with left child $v$ and right child $w$,
starting with the case that $u$ is even 
($u=(\ell,m)$ with $m$ even).
The triple at $u$ is $(i_u,j_u,k_u) = (i_u, i_v, i_w)$.
By definition of $\f$,
$\f(v)=\f(u)$, $\f(w)=u$.
Also, the four nodes ${\f(v)}={\f(u)}$,
${\f(w)}={u}$, $v$, and $w$ are all distinct
(using that $u$ is not the root)
as are their corresponding indices $i$.
What indices can occur both within and outside $\cT_u$?
The only candidates are the new index $i_u$
and, inductively from the two subtrees,
$i_v$, $i_{\f(v)}$, $i_w$, and $i_{\f(w)}$.
Inductively, $i_v$ makes only one appearance outside $\cT_v$,
and that is accounted for by its appearance as the second element of 
the triple at $u$.
The same goes for $i_w$, appearing as the third element of the triple at $u$.
Two appearances of $i_{\f(v)}$ lie outside $\cT_v$,
and since $\f(v)=\f(u)$ is an ancestor of $u$
(again using that $u$ is not the root), lying outside $\cT_u$,
there are (as desired) two appearances of $i_{\f(u)}$ outside $\cT_u$.
Finally, two appearances of $i_{\f(w)}$ lie outside $\cT_w$,
and since $\f(w)=u$, one of these appearances is 
the first element $i_u$ of the triple at $u$,
leaving (as desired) one appearance of $i_u$ outside $\cT_u$.
A similar argument applies to $u$ odd, with the roles of $v$ and $w$ swapped.

Since $i_u$ and $i_{\f(u)}$ are the only indices appearing
both outside $\cT_u$ (thus in the domain of $\p'$)
and inside it,
and since for any other index in $\cT_u$ we are free to choose 
the value of $\p$ as we like
(subject only to the prescription in \eqref{pidef} that $\p(u) \in N_u$)
it is immediate that whether $\p'$ has a good extension $\p$
depends only on the evaluation of $\p'$ at these two points.
\end{proof}

Taking advantage of Claim \ref{goodSubtrees},
we define a pair $(a,b)$ to be good for $\cT_u$ if,
with $\piip {\f(u)} = a$ and $\piip u = b$,
$\p'$ has a good extension $\p$ to $\cT_u$.

\begin{remark}  \label{remConstruct}
We can recursively, bottom-up, construct all good pairs 
for every non-root node of $\cT$.
For a leaf $u$, $(a,b)$ is good iff $(b,a,b) \in G$ (for even $u$)
or $(b,b,a) \in G$ (for odd $u$). 
For a non-root, non-leaf even $u$ with left child $v$ and right child $w$,
let $\piip {\f(u)} = \piip {\f(v)} = a \in N_{\f(u)}$ and
$\piip u = \piip {\f(w)} = b \in N_u$.
Then $(a,b)$ is good for $\cT_u$,
i.e., $\p'$ has a good extension $\p$ to $\cT_u$,
iff there exist values
$\pii v = c \in N_v$ and $\pii w = d \in N_w$
such that the triple at $u$ is good, 
i.e., $(b,c,d) \in G$,
and the subtrees at $v$ and $w$ have good extensions,
which by the inductive hypothesis is to say that
$(a,c)$ is good for $\cT_v$, and $(b,d)$ is good for $\cT_w$.
A similar calculation works for $u$ odd.
\end{remark}

For a node $u=(\ell,m)$, with $a \in N_{\f(u)}$, it proves convenient to define
\begin{align*}
\SS \ell m(a) &= \set{ b \in N_u \colon (a,b) \text{ is good for } \cT_u}
\end{align*}
Algorithm \ref{mainalgy} simply implements
the leaf-up iterative application of
the goodness test described in Remark \ref{remConstruct}.%
\footnote{This iterative construction can be viewed as a dynamic program.
Specifically, it is a dynamic program on
a tree decomposition of a hypergraph, of a sort described by
Scott and Sorkin \cite[Theorem 6 and its proof]{SS}.
We cannot exploit \cite{SS} because we need more details 
to do our probabilistic counting, but we will state the connection.
Consider a hypergraph whose vertices are the indices of $\cT$
and whose hyperedges are its triples.
The critical aspect of the connection 
(and of our function $\f$) is that $\cT$ gives a 
bounded-treewidth tree decomposition of the hypergraph:
the ``bag'' at a node $u$ consists of the three indices $i$, $j$, $k$ 
in its triple and (for internal nodes)
the (unique) index $\f(i)$ that occurs both below and above $u$.

Let $f(\p)$ be a function on labelings which is
obtained as the product over edges $e$ of functions $f_e$ of the 
labeling of edge $e$.
\cite{SS} shows how to efficiently solve the problem of finding 
a labeling maximizing $f$, 
and indeed related counting problems,
in the general setting of graphs of small treewidth;
an extension to hypergraphs is merely notational.
In this case we take the function for each hyperedge
to have value 1 if the labeled triple is good (low cost), 0 otherwise,
so that the product function $f$ of the whole labeling is
1 iff the labeling makes all triples good,
and the Scott-Sorkin algorithm will find a good labeling if one exists.
}
It generalizes Algorithm \ref{FinalPhase2} (\FinalPhase for $\dd=2$);
it may also be seen as playing a role analogous to Algorithm \ref{mainalg1g}
though it is entirely different in its details.

\begin{algorithm}[H]
\caption{Final Phase SetConstruct: construct sets used to augment
 the assignment by one}
\label{mainalgy}
\begin{algorithmic}[1]
\FOR{$m=1,\ldots,2^{\dd}-1$}
 \FOR{$a \in N_{\f({d,m})}$}    \label{aline1}
  \STATE Let       \label{Sline1}
    \begin{align*}
     \SS d m (a) &=
      \begin{cases}
       \set{b\in N_{d,m} \colon (b,a,b)\in G} & \text{for $m$ even}, 
       \\
       \set{b\in N_{d,m} \colon (b,b,a)\in G} & \text{for $m$ odd} .
      \end{cases}
     \end{align*}
 \ENDFOR
\ENDFOR
\FOR{$\ell=\dd-1,\ldots,1$}
 \FOR{$m=0,\ldots,2^\ell-1$}
 \FOR{$a \in N_{\f({\ell,m})}$}    \label{aline2}
   \STATE Let       \label{Sline2}
    \begin{align*}
     \SS \ell m (a) &=
     \begin{cases}
      \rule{0cm}{0.0cm}
      \set{b \in N_{\ell,m} \colon \exists 
             c\in \SS {\ellp}{2m}(a), \: d\in \SS {\ellp}{2m+1}(b),
             \text{ s.t.} (b,c,d) \in G} & \text{for $m$ even},
      \\
      \rule{0cm}{0.6cm}
      \set{b\in N_{\ell,m} \colon \exists 
             c\in \SS {\ellp}{2m}(b), \: d\in \SS {\ellp}{2m+1}(a),
	     \text{ s.t.} (b,c,d) \in G} & \text{for $m$ odd} .
     \end{cases}
    \end{align*}
 \ENDFOR
 \ENDFOR
\ENDFOR
\STATE 
Let $\SS 0 0 = \left( \set n \times \SS 1 0(n) \times \SS 1 1(n) \right)
          \cap G$.
\end{algorithmic}
\end{algorithm}

After Algorithm \ref{mainalgy} completes,
the round of Final Phase completes as follows.
If $\SS 0 0 = \emptyset$, \fail.  
Otherwise, choose any triple $(n,a,b) \in \SS 0 0$ 
and define a partial mapping by 
$\p'(0,0)=n$ (as always),
$\p'(1,0)=a$, and $\p'(1,1)=b$.
By construction, $(n,a,b) \in G$ is good,
$a \in \SS 1 0(n)$ means that $(n,a)$ is good for $\cT_{1,0}$
so $\p'$ has a good extension $\p''$ to this subtree, and 
$b \in \SS 1 1(n)$ means that $(n,b)$ is good for $\cT_{1,1}$
and thus $\p''$ has a completion $\p$ which is good for this subtree
and thus for all of $\cT$.
That is, if $\SS 0 0$ is nonempty,
a good set of labels for all nodes can be found by ``unwinding''
Algorithm \ref{mainalgy} back down to the leaves.
We will not bother to give the procedure explicitly.
This gives a valid augmentation, as guaranteed by Claim \ref{goodSubtrees}.

\subsection{Probability of success}
It remains now to show that Algorithm \ref{mainalgy} succeeds \whp,
producing a nonempty set $\SS 0 0$.
Let us first point out that $N_{0,0}$, the odd set with
cardinality 1 rather than $n_0$ (defined in \eqref{simpler2}),
only ever occurs in Algorithm \ref{mainalgy}'s Lines \ref{aline1} and \ref{aline2},
affecting the number of choices for $a$
and thus the number of sets $S_u(a)$ defined,
but never in Lines \ref{Sline1} and \ref{Sline2}
determining the sizes of these sets.

Let
\begin{gather} \label{simpler2y}
\s_\ell = (\pp n_0)^{2^{\dd-\ellp}-1}
\text{ for } 0 \leq \ell\leq \dd ,
\end{gather}
Note that
\begin{align}
\s_\ell &=\pp n_0 \s_{\ellp}^2, 
\quad
 \pp n_0 \geq n^\tk, 
\quad \text{and so } \quad
\s_1  \gg\s_2\gg\cdots\gg\s_\dd . \label{sigmal0}
\end{align}

\begin{claim} 
 \label{finalphasesets}
Quite surely,
for all $\ell \in \set{1,\ldots,d}$, 
$m \in \set{0,\ldots,2^\dd-1}$, 
and $a \in \SS \ell m$, 
\begin{align}
\card{\SS \ell m(a)} \geq
 (1-\d)^{2^{\dd-\ellp}-1} \s_{\ell} 
 \label{claimsize}
\end{align}
where $\s_\ell$ is as in \eqref{simpler2y}
and $\d=1/\log^2n$. 
\end{claim}

\begin{proof}
The proof is by induction, proceeding through 
$\ell=d,\ldots,1$ and $m=0,\ldots,2^\ell-1$
in the same sequence as Algorithm \ref{mainalgy}.
At each step in the analysis
we will condition upon a successful history, with no failures 
(i.e., \eqref{claimsize} holds);
the final success probability is the product of the conditional probabilities,
which is at least 1 minus the sum of the conditional failure probabilities.
Conditioning will assure us that the previous set sizes are as desired,
but, as we will see, 
all the triples considered are disjoint
and thus the conditioning has no other effect.
For $\ell=d$,
from Algorithm \ref{mainalgy} Line \ref{Sline1},
for any $m$ and any $a \in N_{\ell,m}$,
$$\card{ \SS \dd m(a)} \sim \Bin(n_0,\pp) . $$
The sizes of the various sets
are independent 
because the trials for one value of $u$
are distinct from those for any other, 
as the first elements of their triples come from 
distinct sets $N_{\ell,m}$ in the partition.
Since $\card{ \SS \dd m(a)} \sim \Bin(n_0,\pp) $,
we have
$\E(\card{\SS \dd m})=\s_\dd$, and the Chernoff bounds imply
\begin{align}
\Pr\brac{\: \card{ \SS \dd m(a)} < (1-\d) \s_\dd \:}
 & \leq
 \exp\set{-\d^2 \s_\dd/3}.
 \label{baseprob}
\end{align}

The analysis of the size of a set $\SS \ell m(a)$ 
in Algorithm \ref{mainalgy} Line \ref{Sline2}
for an internal node, with $\ell < d$,
is slightly more complicated.
For $m$ even, 
a given $b$ belongs to $\SS \ell m (a)$ 
if any of the 
$\card{\SS \ellp {2m} (a)} \: \card{\SS \ellp {2m+1} (b)}$
candidates for $c$ and $d$ leads to a triple $(b,c,d)\in G$.
Conditioning upon success up to this point,
$ \SS \ellp {2m} (a)$ and $\SS \ellp {2m+1} (b)$
each have cardinality satisfying \eqref{claimsize},
so the number of trials for $b$ is at least
\begin{align}
N &= \brac{(1-\d)^{2^{\dd-\ell}-1}\s_{\ellp}}^2 .  \label{Ndef}
\end{align}
The same value $N$ results for $m$ 
odd, so the two cases now continue as one.
These trials for $(b,c,d) \in G$ are independent of all those in the history:
the first index $b$ rotates through distinct choices within node $\ell,m$, 
and $b \in N_{\ell,m}$ assures that first index $b$ was not explored
for any previous node.
By the independence, each trial succeeds with probability $\pp$.
Thus, for a given $b$, at least one trial is successful
with probability at least
$$1- (1-\pp)^N \geq 1-\exp(-\pp N) \geq \pp N (1-\pp N) , $$
where the second inequality relies on $\pp N \leq 1$
(in fact $\pp N=o(1)$, see \eqref{N}).

Considering the $n_0$ candidates for $b$ yields that 
\begin{align}
\card{\SS \ell m (a)} 
 & \sgeq \Bin(n_0, \pp N(1-\pp N)). \label{thebin}
\end{align}

We will shortly need the results of some calculations.
Recall from the hypothesis of Theorem \ref{th2}
that $\dd \leq \e \log_2 \log n$, with $\e<1/2$ 
so that
\begin{align*}
 2^\dd & \leq \log^\e n = o(\log n) .
\end{align*}
Since $\ell$ is playing the role of inductive index here, we use $\lambda$
for a generic equivalent in this paragraph.
In general,
\newcommand{\lambdap}{\lambda+1}
\begin{align}
 \s_\lambda 
  & = (\pp n_0)^{2^{\dd-\lambdap}-1}
   = \parens{ ({n^{\tk-1}}{\log n}) \; \frac{n(1-x/n)}{2^\ddp-2} }
    ^{2^{\dd-\lambdap}-1}
   \leq \parens{ \log n \; n^\tk}^{2^{\dd-\lambdap}-1} , \label{sigmad}
\end{align}and in particular, using
$ \tk (2^\dd-1) = (2^\dd-1) / (2^\ddp-1) \leq 1/2 $,
\begin{equation}\label{kk1}
 \s_1
  \leq
   (\log n)^{2^d-1} n^{\tk (2^\dd-1)}
   \leq (\log n)^{\log^\e n} n^{1/2}
   = n^{1/2+o(1)} .
\end{equation}
 From \eqref{Ndef}, $N \leq \s_\ellp^2$
and, using \eqref{sigmal0} and \eqref{kk1},
\begin{align}
\pp N \leq 
 \pp \s_\ellp^2 =
 \frac1{n_0} \parens{ n_0 \pp\s_\ellp^2 }
 \leq \frac1{n_0} \s_\ell
 \leq \frac1{n_0} \s_1
  \leq n^{-1/2+o(1)} .
  \label{N}
\end{align}

Returning now to \eqref{thebin} and the first part of \eqref{sigmal0}, 
the expectation of the binomial in the RHS is
$$n_0\pp N(1-\pp N)=(1-\pp N)(1-\d)^{2^{\dd-\ellp}-2}\s_\ell,$$
and \eqref{claimsize} holds if the binomial's value is at least
$(1-\d)^{2^{\dd-\ellp}-1}\s_\ell$,
i.e., if it is at least $1-\z := (1-\d)/(1-\pp N)$ times its expectation.

Now \eqref{N} 
gives $\pp N = n^{-1/2+o(1)} \ll 1/\log^2 n = \d$ and
implies that $\z = \d-o(\d)$, implying $\z^2/3 > \d^2/4$. 
Then the Chernoff bound gives us 
\begin{multline}
\Pr\brac{\: \card{ \SS \ell m(a)} < (1-\d)^{2^{\dd-\ellp}-1} \s_\ell \:}
  \leq
 \exp\set{-\z^2(1-\d)^{2^{\dd-\ellp}-2}\s_\ell/3}
 \\ 
  \leq 
 \exp\set{-\d^2(1-\d)^{2^{\dd-\ellp}-2}\s_\ell/4} 
   \leq 
 \exp\set{-\d^2(1-\d)^{2^{\dd}}\s_\ell/4} . \label{midprob}
\end{multline}

The failure probabilities in \eqref{baseprob} and \eqref{midprob}
are both 
$\leq \exp\set{-\d^2(1-\d)^{2^{\dd}}\s_\dd/4}$.
Recall that $2^\dd=o(\log n)$, so 
$$
 \s_\dd =\pp n_0
  \gg n^\tk
  \geq n^{1/\log^{1/2}n}\geq  (\log n)^K
$$
for any fixed $K$.
Using this and
$(1-\d)^{2^\dd} \geq 1-\d 2^\dd = 1-o(1)$,
the failure probability is at most
$$\exp\set{-\frac{\d^2\s_\dd}{4+o(1)}} 
     \leq \exp\set{-\Omega(\d^2 \s_\dd)}  
     \leq \exp\set{-\Omega( (\log n)^{K-4})}
     \leq \exp\set{-\log^2 n} .
$$
The sum of all failure probabilities is thus $\leq 2^\ddp \exp(-\log^2n)$,
and \qs\ there is no failure.
\end{proof}

Claim \ref{finalphasesets} establishes that, \qs,
$\card{\SS 1 1(n)},\card{\SS 1 2(n)} \geq (1-o(1))\s_1$. 
Since $\SS 0 0$ simply consists of triples $(n,a,b) \in G$
with $a \in \SS 1 0(n)$ and $b \in \SS 1 1(n)$,
and no triple with first index $n$ has previously been considered,
conditioned on successful progress of the algorithm to this point,
$\SS 0 0$ is empty with probability
at most 
$(1-\pp)^{\s_1^2/2} \leq \exp\brac{-\pp \s_1^2/2}$. 
But 
\begin{align*}
 \pp \s_1^2 
  &= \frac1{n_0} \s_0
  \geq \frac{2^\ddp-2}{n(1-x/n)} 
       \parens{({n^{\tk-1}}{\log n}) \; \frac{n(1-x/n)}{2^\ddp-2}}^{2^\ddp-1}
   \\ &
  \geq  n^{-1+\tk (2^\ddp-1)} (2^\ddp-2)
     \parens{\frac{\log n}{2^\ddp-1}}^{2^\ddp-1}
  = (2^\ddp-2) \parens{\frac{\log n}{2^\ddp-1}}^{2^\ddp-1}
  \\& \geq 6 (\log n/7)^7 
  \gg \log n.
\end{align*}
The penultimate inequality comes from noting that 
the function 
$(x-1) (a/x)^x$ is log-concave for $x>0$ and so,
over an interval, is minimized at one of the endpoints.
Taking $a=\log n$, putting $\dd=2$ gives $x=7$
for a value of $6(\log n/7)^7$, while putting $\dd$ 
$=\tfrac12 \log_2 \log n$ 
(the right endpoint of a larger interval than is allowed)
gives a larger value.

This shows that the final phase succeeds \qs

\subsection{Execution time}
It is fairly clear that the algorithm runs in polynomial time,
but we will show that it runs in linear time
(remembering that the input is of size $n^3$).
Specifically we will show that
\GreedyPhase takes time $\Theta(n^3)$,
while the other two phases take slightly less:
\MainPhase runs in time $O(n^{3-\frac54\tk})$, and
\FinalPhase in time only $O(n^{\frac52+o(1)})$.

The aspects of each round of \MainPhase that are potentially
time-critical are \SetConstruct, 
and, for each iteration:
finding a healthy element, \MakeTree, and \PoisonPropagate.
The last two cannot dominate.
\MakeTree merely repeatedly finds, for a (non-poisoned) triple,
the (unique) two elements that generated it in \SetConstruct;
thus if we create pointers during \SetConstruct,
this takes time linear in the size of the output.
Since in all iterations of \MakeTree all the output elements are distinct,
the total time is bounded by the size of \SetConstruct's output
and thus by its running time.
Similarly, in \PoisonPropagate, 
the elements poisoned by a given element are a direct function 
of the output of \SetConstruct and can be driven by pointers,
so each poisoning takes unit time.
We contrived to poison at most half the output of \SetConstruct,
so again the time is bounded by \SetConstruct's time.
(Some elements may be poisoned repeatedly,
but in our bound on the sizes of the poisoned sets we 
made the pessimistic assumption that all poisonings were distinct,
so the bound still applies.)

In each round, the time to find a valid element, in 
Algorithm \ref{mainalg2} (\MainPhase) Line \ref{line search},
is $O(\s_1^2) = O(n^2)$.
We will shortly see that this is dominated by the time for \SetConstruct.
The running time for one round of \SetConstruct is of order
$
\sum_{\ell=1}^\dd \sum_{m=1}^{2^\ell} n \s_{\ellp}^2 
= O(n \s_2^2)
$,
since $\s_\ell \gg \s_\ellp$ (see Claim \ref{sizeclaim})
means that the time for each node $(\ell,m)$
dominates the sum of the times for its two children,
so the $\ell=1$ terms dominate the sum.
By \eqref{recurrence},
$n \s_2^2 = (1+o(1)) \frac1\pp \s_1$,
and by \eqref{sigma1}, $\s_1 \leq n^{1-\tk/3}$.
Using $x \leq n^{1-\tk}$ we have
$1/\pp = x^{1+\tk} n^{1-\tk} / \log n$ 
$ \leq n^{2-\tk-\tk^2} 
$,
and thus 
$
 \tfrac1\pp \s_1 \leq n^{3-\tfrac43\tk} n.
$
In all there are $O(\log^2 n)$ rounds,
so by familiar calculations
the total time for all calls to \SetConstruct is
$O(n^{3-\tfrac54\tk})$.

We reason similarly for \FinalPhase.
The time per round to find a valid element 
(Algorithm \ref{FinalPhase2} Line \ref{final2valid})
is $O(\s_1^2) = O(n^2)$, which we shall shortly see
is dominated by the time per round for set construction.
The set construction time (Algorithm \ref{mainalgy}) is of order
$\sum_{\ell=1}^\dd \sum_{m=0}^{2^\ell-1} n_0^2 \s_\ellp^2
 = O(n_0^2 \s_2^2)
 = O(\frac{n_0}{\pp} \s_1)
 $.
By \eqref{simpler} and \eqref{kk1}, this is 
$
O \parens{ \frac{n}{n^{\tk-1}} n^{1/2 +o(1)} }
$.
There are $2^\dd-1 \leq \log^\e n$ rounds,
so the total time is
$n^{\tfrac52-\tk+o(1)}$.

\medskip
This completes the proof of Theorem \ref{th2}.

\section{Difficulties with Axial assignment in dimensions ${\DD>3}$} \label{higherAxial}

Neither \MainPhase nor \FinalPhase successfully carries over to $\DD>3$.
Consider \MainPhase with depth $\dd=2$.
Our algorithm was complicated in order to cope with conditioning,
but ultimately an augmentation,
for $\DD=3$,
relied on choosing $2+4=6$ indices $j,\ldots,s$ from $A$
and 8 indices $\xi_1,\ldots,\xi_8$ from $\bA$
to make $1+2+4=7$ triples cheap.
If ``cheap'' means $\leq \pp$, 
by the first-moment method this can succeed with high probability only if
$n^6 x^8 \pp^7 = \Omega(1)$,
forcing $\pp=\Omega(n^{-6/7} x^{-8/7})$.
When the number of unassigned elements $x$ is $\Theta(1)$, 
this means that a single augmentation costs $\Omega(n^{-6/7})$: 
more than the $O(n^{2-\DD} \log n)$
bound on the \emph{total} assignment cost from Theorem \ref{th1},
but not vastly more.
The equivalent for $\DD=4$,
with a ternary tree, is to choose $3+9=12$ indices from $A$
and 27 from $\bA$ to make $1+3+9=13$ triples cheap, requiring
$\pp=\Omega(n^{-12/13} x^{-27/13})$.
Here, $x=\Theta(1)$ gives $p=\Omega(n^{-12/13})$,
which --- still stuck above $n^{-1}$ ---
is now vastly worse than the $O(n^{2-\DD} \log n)$ upper bound.

\FinalPhase, in its last round, performs a replacement of some
$\kk=2^\dd-2$ assignment $\DD$-tuples
--- which, up to symmetries, we take to be
$\set{(i,i,\ldots,i),1\leq i\leq \kk}$ ---
with $\kk+1$ $\DD$-tuples from 
$\parens{ [\kk] \cup \set{n}}^\DD$.
When $\kk$ is a constant, the probability that there is any new such assignment
of cost $\leq \pp$ 
is at most
$\binom n \kk (\kk!)^{\DD-1} \pp^\kk = O( (n\pp)^\kk)$,
so for the algorithm to have any hope of succeeding
requires
$\pp=\Omega(n^{-1})$.
Again this is satisfactory for $\DD=3$, but for $\DD>3$ it is
vastly worse than the $O(n^{2-\DD} \log n)$ upper bound.

\section{Multi-dimensional Planar assignment} \label{AV}

\subsection{Main theorem} \label{mainPlanar}
Here we give our main theorem for Planar assignment.
\begin{theorem}\label{th3}
The optimal solution value $\ZPl{\DD}$ satisfies the following:
\begin{description}
\item[(a)] $\ZPl{\DD}=\Omega(n^{\DD-2})$ \whp\ for $\DD\geq 3$.
\item[(b)] When $\DD=3$ there is a polynomial-time algorithm that finds a solution with
cost $Z$ where $Z=O(n\log n)$ \whp
\end{description}
\end{theorem}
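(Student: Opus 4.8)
The plan is to handle the two parts separately, since they use quite different ideas.

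For part (a), the lower bound, I would use the same ``relaxation'' argument sketched in Section~\ref{secsummary}. Drop the third family of constraints $(\forall i,k)\sum_j X_{i,j,k}=1$; what remains is the requirement that for each fixed $i$ the slice $X_{i,\cdot,\cdot}$ is a permutation matrix, and the constraint $(\forall j,k)\sum_i X_{i,j,k}=1$ is now automatically compatible once we only ask for something weaker — in fact the cleanest relaxation is simply to observe
$$\ZA{d}\geq \sum_{i_1=1}^{n}\ \min_{\substack{\text{over all choices of } \\ (i_2,\ldots,i_d)\text{, }i_1\text{ fixed}}} (\text{cost of a min axial assignment of the }i_1\text{-slice}),$$
but more simply still, $\ZA{d}$ is bounded below by the cost of solving, independently for each value of the first coordinate, a $(d-1)$-dimensional \emph{axial} assignment on an $n\times\cdots\times n$ array --- and by induction the base case $d=3$ reduces to $n$ independent $2$-dimensional assignment problems on disjoint i.i.d.\ \EX\ arrays. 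Each such $2$-dimensional problem has expected optimum $\sum_{i=1}^n 1/i^2$ and, crucially, variance $O(1)$ (this is classical, e.g.\ from the Aldous/W\"astlund circle of results, or can be got from a crude union bound), so the sum of $n$ independent copies concentrates: it is $n\sum_{i=1}^n 1/i^2 (1+o(1)) = \Omega(n)$ \whp\ by Chebyshev. For general $d$ the same stripping argument gives $n^{d-2}$ disjoint $2$-dimensional instances and hence $\Omega(n^{d-2})$ \whp.

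For part (b), the algorithm is the greedy matching scheme described in the Axial overview: view the $3$-dimensional array as $n$ layers indexed by the third coordinate $k$, and build the assignment one layer at a time. Layer $1$ is a minimum-cost perfect matching in $K_{n,n}$ with the i.i.d.\ \EX\ costs $C_{i,j,1}$; having chosen matchings $M_1,\ldots,M_{\ell-1}$ (which as edge sets of $K_{n,n}$ on the $(i,j)$ coordinates are pairwise disjoint, since for each $(i,j)$ only one $k$ may be selected), layer $\ell$ is a minimum-cost perfect matching, using costs $C_{i,j,\ell}$, in the graph $G_\ell := K_{n,n}\setminus(M_1\cup\cdots\cup M_{\ell-1})$. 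Since each $M_s$ is a perfect matching, $G_\ell$ is $(n-\ell+1)$-regular bipartite, hence has a perfect matching, so the algorithm never gets stuck; and each step is a polynomial-time min-cost assignment computation, so the whole thing runs in polynomial time. For the cost bound I would invoke the result of Dyer, Frieze and McDiarmid \cite{DFM}: for any fixed bipartite graph $H$ on $n+n$ vertices with minimum degree $\delta$, the expected cost of a minimum-weight perfect matching with i.i.d.\ \EX\ edge weights is at most $n/\delta \cdot (\text{const})$ --- more precisely the bound $2n/(n-\ell+1)$ quoted in Section~\ref{secsummary} for $G_\ell$. Summing over $\ell=1,\ldots,n$ gives $\E[Z]\le \sum_{\ell=1}^n 2n/(n-\ell+1) = 2n H_n = O(n\log n)$; linearity of expectation suffices here and then Markov (or a second-moment estimate, if an \whp\ statement is wanted rather than just in expectation) finishes it.

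The main obstacle is a subtle independence issue in part~(b): after choosing $M_1,\ldots,M_{\ell-1}$ we have conditioned on the costs in layers $1,\ldots,\ell-1$, but layer $\ell$'s costs $\{C_{i,j,\ell}\}$ are a \emph{fresh}, independent i.i.d.\ \EX\ family, untouched by the conditioning --- and the graph $G_\ell$, although it depends on the earlier layers, is independent of layer $\ell$'s costs. So the Dyer--Frieze--McDiarmid bound applies conditionally on $G_\ell$, with the randomness being exactly the (independent) layer-$\ell$ weights, and since the bound $2n/(n-\ell+1)$ holds for \emph{every} $(n-\ell+1)$-regular bipartite $G_\ell$ it survives taking expectation over the earlier layers. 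Making this conditioning argument clean --- and confirming that \cite{DFM} is stated in the ``worst-case graph, random weights'' form we need rather than assuming a random graph --- is the one place care is required; everything else is routine.
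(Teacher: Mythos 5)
Your proposal matches the paper's proof in both parts: for (a) you decompose the axial problem into $n^{d-2}$ independent $2$-dimensional slices and use concentration of the sum of their $\zeta(2)$-scale optima, and for (b) you run the same greedy layer-by-layer perfect-matching scheme and invoke the Dyer--Frieze--McDiarmid bound $2n/(n-\ell+1)$ on each $(n-\ell+1)$-regular residual bipartite graph, summing to $O(n\log n)$. Your closing concern is already settled by the form in which the paper states Theorem~\ref{DFM} (fixed constraint matrix, random cost vector, and a \whp\ version of the bound), so the conditioning on earlier layers poses no problem and the \whp\ conclusion follows directly rather than via Markov.
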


The theorem is proved in the next two sections.

\subsection{Lower bound}\label{3GA1}
{From} the defining equation \eqref{PP} for Planar assignment,
just attending to constraints of the first type,
for each choice $i_1,\ldots,i_{\DD-1}$ of the first $\DD-1$ coordinates, 
the cheapest element in the line (ranging over the last coordinate)
has distribution $\expdist(n)$,
with expectation $1/n$.
Summing over all $n^{\DD-1}$ such lines, 
the total expected cost is $\geq n^{\DD-2}$.
Each line, independently, has cost $\geq 1/n$ with probability $1/e$,
so there are $\Bin(n^{\DD-1},1/e)$ lines for which this is so,
by a Chernoff bound \qs\ at least $n^{\DD-1}/(2e)$ lines have this property,
and thus \qs\ the total cost is $n^{\DD-2}/(2e)$.

\subsection{Upper bound for $\DD=3$}\label{3GA}
For the upper bound we need a result of Dyer, Frieze and McDiarmid \cite{DFM}. We will not state it in full generality,
instead tailoring its statement to precisely what is needed here.
Suppose that we have a linear program
$$\LP \colon \quad \text{Minimize $c^Tx$ subject to $Ax=b$, $x\geq 0$.}$$
Here $A$ is an $m\times n$ matrix and the cost vector $c=(c_1,\ldots,c_n)$ is a sequence of independent copies of 
$\EX$.
Let $Z_\LP$ denote the minimum of this linear program. Note that
$Z_\LP$
is a random variable. Next let $y$ be \emph{any} feasible solution to $\LP$.
\begin{theorem}[\cite{DFM}]\label{DFM}
\beq{zz}
\E(Z_\LP)\leq m\max_{j=1,\ldots,n}y_j.
\eeq
Furthermore, $Z_\LP$ is at most $1+o(1)$ times the RHS of \eqref{zz}, \whp
\end{theorem}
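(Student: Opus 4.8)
Theorem~\ref{DFM} is quoted from~\cite{DFM}; the plan is to reconstruct their argument. First reduce to a basic feasible solution: the minimum in $P$ is attained at a vertex $x^{\ast}$ of $\set{x\colon Ax=b,\ x\geq 0}$, and a vertex has at most $r:=\operatorname{rank}(A)\leq m$ nonzero coordinates, so $Z_P=c^{T}x^{\ast}=\sum_{j\in B}c_{j}x^{\ast}_{j}$ with $B=\mathrm{supp}(x^{\ast})$ and $|B|\leq m$. Next put the cost in layer-cake form,
$$Z_P=\int_{0}^{\infty}h(s)\,ds,\qquad h(s):=\sum_{j\,:\,c_{j}>s}x^{\ast}_{j},$$
so that $\E(Z_P)=\int_{0}^{\infty}\E[h(s)]\,ds$, and it suffices to bound this integral by $mM$, where $M=\max_{j}y_{j}$.

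The heart of the matter is to show that the optimal solution is not forced to carry much weight on expensive columns. For a fixed threshold $s$ I would condition on $\brac{S_{s},\set{c_{j}}_{j\in S_{s}}}$ with $S_{s}=\set{j\colon c_{j}\leq s}$; by the memorylessness of the exponential the costs of the columns outside $S_{s}$ are then, conditionally, i.i.d.\ $s+\EX$ and independent of everything revealed. Given this one bounds $\E[h(s)]$ by exhibiting, for each outcome, a cheap feasible solution that routes as much of $b$ as possible through columns of $S_{s}$ and completes the remainder through a few columns of the complement, using the witness $y$ to certify that such a completion exists. The factor $m$ comes from a Carath\'eodory / basic-solution estimate --- only $O(m)$, rather than all $n$, complement columns are needed --- while the factor $M$ comes from the exponential distribution, which both bounds the number of cheap columns available at a given threshold and forces the bulk of $\int_{0}^{\infty}\E[h(s)]\,ds$ into a window of $s$ of width $\Theta(M)$ near the origin; assembling these and tracking constants yields $\E(Z_P)\leq mM$.

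For the ``furthermore'', $Z_P$ is concentrated about its mean --- by a second-moment computation in the regime $m\max_{j}y_{j}\to\infty$ that we shall actually use, or a concentration inequality of Azuma--McDiarmid type more generally (after a standard truncation of the negligibly rare atypically large $c_{j}$) --- so $Z_P$ is at most $(1+o(1))$ times the right-hand side of~\eqref{zz} \whp. The step I expect to be the main obstacle is the middle one: the naive bound ``there is a feasible solution supported on the $k$ cheapest columns, hence $Z_P$ is at most the $k$th smallest cost times the total weight of that solution'' loses a logarithmic factor in the regime we need, so the argument has to genuinely combine the polyhedral vertex structure (to replace the column count by $m$) with the exponential tail (to remove the logarithm), while arranging the conditioning so that independence is preserved throughout.
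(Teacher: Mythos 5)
The paper does not contain a proof of this theorem: it is stated as a direct quotation from Dyer, Frieze and McDiarmid \cite{DFM}, and the surrounding text explicitly says ``we will not state it in full generality, instead we will tailor its statement to precisely what is needed.'' There is therefore no in-paper proof to compare your attempt against; the paper simply applies the cited result.

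Taken on its own terms as a reconstruction of the \cite{DFM} argument, your sketch assembles reasonable ingredients (basic feasible solutions have support size at most $m$, the layer-cake identity $Z_P=\int_0^\infty h(s)\,ds$, memorylessness of the exponential, a second-moment or bounded-differences bound for the ``furthermore''), but it stops exactly where the content of the theorem lives. The step you yourself flag as ``the main obstacle'' --- showing that routing through the cheap columns and completing with $O(m)$ expensive ones yields $\int_0^\infty \E[h(s)]\,ds\leq m\max_j y_j$ with no spurious logarithm --- is described, not proved; the factors $m$ and $\max_j y_j$ are asserted to ``come from'' the polyhedral and distributional structure rather than derived. And one cannot simply bound $h(s)=\sum_{j:c_j>s}x^{*}_j$ by $m$ times a tail probability, because the coordinates of the optimal basic solution $x^{*}$ need not be bounded by $\max_j y_j$: in the paper's own application $y_{j,k}=1/(n-i+1)$, yet every vertex of the perfect-matching polytope has coordinates equal to $1$, so the Carath\'eodory/BFS observation alone does not deliver the factor $\max_j y_j$. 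In short, you have correctly identified the moving parts, but the argument that actually combines them --- which is the whole point of the lemma the paper outsources to \cite{DFM} --- is missing.
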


Our algorithm is based on a characterization of 
a 3-dimensional Planar assignment
as a collection of 2-dimensional Planar assignments.
(We think of these as $(D-1)$-dimensional Planar assignments,
but in 2 dimensions Planar and Axial assignment are the same.)
for For generalization to dimensions $D>3$ we think of these
for purposes o
Fix a 1-coordinate $i$.
Then for every 2-coordinate $j$, 
by the first constraint type of \eqref{PP}, the line $(i,j,*)$
must contain exactly one assignment element,
and by the third constraint type,
the same is true for every 3-coordinate $k$ and line $(i,*,k)$.
Thus, the assignment elements in plane $(i,*,*)$
define a 2-dimensional assignment. 

Of course there are constraints among these 2-dimensional assignments. 
By the second constraint type,
for $i \neq i'$, the elements $(i,j,k)$ and $(i',j,k)$
cannot both be selected; 
that is, 
\begin{align}  \label{alt3}
 (\forall j,k) \sum_i X_{i,j,k} \leq 1 . 
\end{align}
This condition is sufficient as well as necessary:
any collection of 2-dimensional assignments satisfying it
is a 3-dimensional assignment.
This is clear because the collection of 2-dimensional assignments has 
$n^2$ elements, 
thus $\sum_{j,k} \sum_i X_{i,j,k} = n^2$,
thus if \eqref{alt3} is always satisfied, 
it must in every case be satisfied with equality.
In that case the selection satisfies the second constraint type of \eqref{PP}
as well as the first and third, and is a 3-dimensional assignment.

Now consider the following greedy-type algorithm.
First, find a minimum 2-dimensional assignment for
1-plane $i=1$. 
For each element $(j,k)$ selected,
remove $(j,k)$ from consideration for 2-dimensional assignments
for all 1-planes $i'$ with $i'>i$.
Find a minimum assignment for 1-plane $i=2$ 
with this restriction,
remove these elements from consideration, and so on.

\newcommand{\GreedyPlanarT}{\textsc{GreedyPlanar3D}}
\newcommand{\Ma}{\Xi}

\begin{algorithm}[H]
\caption{\GreedyPlanarT: construct a 3-dimensional Planar assignment}
\label{greedy3p}
\begin{algorithmic}[1]
\FOR{$i=1,\ldots,n$}
\STATE Let $H_i=K_{n,n}\setminus (\Ma_1\cup \cdots \cup \Ma_{i-1})$
\STATE Give each edge $(j,k)\in E(H_i)$ cost $\Mijk$
\STATE Let $\Ma_i$ be a minimum cost matching of $H_i$ 
\ENDFOR
\STATE Return $\Ma_1,\ldots,\Ma_n$
\end{algorithmic}
\end{algorithm}

The output $\Ma_1,\ldots,\Ma_n$ defines a set of triples $T=\set{(i,j,k)
\colon (j,k)\in \Ma_i}$
that by the previous discussion is a 3-dimensional Planar assignment.
Writing $Z_i$ for the cost of matching $\Ma_i$, we claim that
\beq{ax}
\E(Z_i)\leq \frac{2n}{n-i+1}.
\eeq
For this we apply Theorem \ref{DFM} to the following linear program,
which we note
always has an integer optimum solution:
\begin{align*}
\text{Minimize} \sum_{(j,k)\in E(H_i)} \Mijk \; x_{j,k} 
\quad & \text{ subject to} 
\\
\sum_{k \colon (j,k)\in E(H_i)}x_{j,k}=1, \qquad & j=1,\ldots,n\\
\sum_{j \colon (j,k)\in E(H_i)}x_{j,k}=1, \qquad & k=1,\ldots,n\\
x_{j,k}\geq 0, \qquad & j,k=1,\ldots,n .
\end{align*}
We note that there are $2n$ constraints and that $x_{j,k}=1/(n-i+1)$ is a
feasible solution.
By Theorem~\ref{DFM}, this implies \eqref{ax} and the upper bound in Theorem \ref{th3} for the
case $D=3$.

\section{Difficulties with Planar assignment in dimensions ${\DD>3}$} 
\label{higherPlanar}
The previous section's characterization of a 3-dimensional assignment
as a collection of $n$ 2-dimensional assignments extends:
a $\DD$-dimensional assignment is a collection of $n$ 
$(\DD-1)$-dimensional assignments,
one for each 1-index $i_1$,
satisfying the property that if $(i_2,\ldots,i_n)$ belongs to assignment $i_1$
then it belongs to no other assignment $i'_1$, i.e.,
$(\forall i_2,\ldots,i_n) \sum_{i_1} X_{i_1,\ldots,i_n} \leq 1$.
As before, this constraint is necessary 
(by the defining constraint on lines for coordinate~1)
and also sufficient:
by the same counting argument as before, if the constraint is 
always satisfied then it is always satisfied with equality,
which thus implies the assignment-defining constraint 
for lines along coordinate 1.
The constraints for lines along other coordinates are satisfied 
since each is included in some $(\DD-1)$-dimensional assignment.

The corresponding algorithm we have in mind is a generalization 
of Algorithm \ref{greedy3p}:
repeatedly solve $(\DD-1)$-dimensional instances
and remove their elements from future consideration.
Each lower-dimensional instance is solved by a recursive call to the same
algorithm, until dimension 2 is reached and the instance is solved as an LP.
Note that all but the first of the $(\DD-1)$-dimensional instances 
has only a subset of the elements available for assignment: 
it is a matching problem on a $(\DD-1)$-partite hypergraph,
but not the complete $(\DD-1)$-partite hypergraph.
That is, the generalized algorithm should solve ``matching'' problems,
not just ``assignment'' problems.

The difficulty is that such problems do not always admit a solution,
and even when we start with the complete hypergraph ---
an assignment problem, which does always have a solution ---
we may generate subinstances that do not.

A small example is shown below,
with $\DD=4$ and $n=4$.
Suppose that in the first 3-dimensional sub-array considered 
(shown as four 2-dimensional arrays),
the elements selected are those indicated by 1s in the table below;
this will happen, for example, if the corresponding cost elements 
are small and the others are large.
\begin{verbatim}
 0 1 0 0   0 0 0 1   0 0 1 0   1 0 0 0 
 0 0 1 0   1 0 0 0   0 1 0 0   0 0 0 1 
 1 0 0 0   0 1 0 0   0 0 0 1   0 0 1 0 
 0 0 0 1   0 0 1 0   1 0 0 0   0 1 0 0 
\end{verbatim}

Then, in the second 3-dimensional sub-array, perhaps the following
selection gives the cheapest assignments
for the first three 2-dimensional instances solved.

\begin{verbatim}
 0 0 1 0   0 1 0 0   0 0 0 1
 0 1 0 0   0 0 1 0   1 0 0 0
 0 0 0 1   1 0 0 0   0 1 0 0
 1 0 0 0   0 0 0 1   0 0 1 0
\end{verbatim}
There is no way to complete this 3-dimensional array:
the last 2-dimensional instance has no solution.
We check that it is impossible to select any element in its first row.
The first row's first element is blocked by the 
2-dimensional array above it
(the 4th 2-dimensional matching comprising the first 3-dimensional matching)
and the row's remaining elements are blocked 
by the 2-dimensional arrays to the left of it
(respectively, by
the 2nd, 1st, and 3rd 2-dimensional matchings for the 
second 3-dimensional matching problem).

This example shows that in dimensions $\DD>3$,
the obvious generalization of our 3-dimensional Planar assignment algorithm 
may fail to return any solution, regardless of cost.

\section{Conclusions}
For the 2-dimensional random assignment problem,
we know the limiting expected cost, and a given instance
can be solved in polynomial time.
As noted in the introduction, much less is known about multi-dimensional
assignment problems, and as far as we are aware,
very little was known about polynomial-time algorithms solving these problems
well on average, especially for random costs with a density $f(x)$ that is close to a constant for $x$ 
close to zero.

For the 3-dimensional Axial assignment problem,
we give an upper bound within $n^{o(1)}$
of the obvious $\Omega(1/n)$ lower bound,
as a trivial application of 
an extension of
a result of
Johansson, Kahn and Vu \cite{JKV}.
Our main result is an algorithm that constructs \whp\
constructs a solution of cost $n^{o(1)}$,
although the cost bound here is not as small as that coming from \cite{JKV}.

For the 3-dimensional Planar assignment problem,
we prove an upper bound
within a logarithmic factor of the obvious $\Omega(n)$ lower bound
(likely the true answer),
by analyzing a simple and fast greedy algorithm.

Neither result extends to $\DD > 3$.
We are left with open questions including these:
\begin{description}
\item[P1] What are the growth rates of
$\E[\ZAx{\DD}]$ and $\E[\ZPl{\DD}]$ for $\DD\geq 3$?
\item[P2] Are there asymptotically optimal,
polynomial-time algorithms for solving these problems when $\DD\geq 3$?
\item[P3]
For $\DD>3$, are there polynomial-time algorithms yielding solutions within
logarithmic or $n^{o(1)}$ factors for Planar and Axial assignment problems
(as we have given for $\DD=3$)?
\end{description}


\begin{bibdiv}
\begin{biblist}

\bib{Aldous92}{article}{
    AUTHOR = {Aldous, David},
     TITLE = {Asymptotics in the random assignment problem},
   JOURNAL = {Probab. Theory Related Fields},
  FJOURNAL = {Probability Theory and Related Fields},
    VOLUME = {93},
      YEAR = {1992},
    NUMBER = {4},
     PAGES = {507--534},
      xxISSN = {0178-8051},
     xxCODEN = {PTRFEU},
   xxMRCLASS = {60C05 (05C70 05C80)},
  xxMRNUMBER = {MR1183889 (94b:60013)},
xxMRREVIEWER = {Mark R. Jerrum},
       xxDOI = {10.1007/BF01192719},
       xxURL = {http://dx.doi.org/10.1007/BF01192719},
}

\bib{Aldous01}{article}{
    AUTHOR = {Aldous, David J.},
     TITLE = {The {$\zeta(2)$} limit in the random assignment problem},
   JOURNAL = {Random Structures Algorithms},
  FJOURNAL = {Random Structures \& Algorithms},
    VOLUME = {18},
      YEAR = {2001},
    NUMBER = {4},
     PAGES = {381--418},
      xxISSN = {1042-9832},
   xxMRCLASS = {60C05 (60F05)},
  xxMRNUMBER = {MR1839499 (2002f:60015)},
xxMRREVIEWER = {Aart J. Stam},
       xxDOI = {10.1002/rsa.1015},
       xxURL = {http://dx.doi.org/10.1002/rsa.1015},
}

\bib{AS}{book}{
 author = {Alon, Noga},
 author = {Spencer, Joel},
 title = {The probabilistic method},
 publisher = {Wiley},
 edition = {third},
 year = {1999},
}

\bib{BCR}{article}{
  author    = {Buck, Marshall W.},
  author    = {Chan, Clara S.},
  author    = {Robbins, David P.},
  title     = {On the expected value of the minimum assignment},
  journal   = {Random Struct. Algorithms},
  volume    = {21},
  number    = {1},
  year      = {2002},
  pages     = {33-58},
  ee        = {http://dx.doi.org/10.1002/rsa.10045},
  bibsource = {DBLP, http://dblp.uni-trier.de}
}

\bib{BDM}{book}{
    AUTHOR = {Burkard, Rainer},
    author = {Dell'Amico, Mauro},
    author = {Martello, Silvano},
     TITLE = {Assignment problems},
 PUBLISHER = {Society for Industrial and Applied Mathematics (SIAM)},
   ADDRESS = {Philadelphia, PA},
      YEAR = {2009},
     PAGES = {xx+382},
      xxISBN = {978-0-898716-63-4},
   xxMRCLASS = {90-02 (90B80 90C10 90C27)},
  xxMRNUMBER = {MR2488749 (2010c:90002)},
}

\bib{CS99}{article}{
    AUTHOR = {Coppersmith, Don},
    author = {Sorkin, Gregory B.},
     TITLE = {Constructive bounds and exact expectations for the random
              assignment problem},
   JOURNAL = {Random Structures Algorithms},
  FJOURNAL = {Random Structures \& Algorithms},
    VOLUME = {15},
      YEAR = {1999},
    NUMBER = {2},
     PAGES = {113--144},
      xxISSN = {1042-9832},
   xxMRCLASS = {05C70 (05C80 68R10 90C15)},
  xxMRNUMBER = {MR1704340 (2001j:05096)},
       xxDOI = {10.1002/(SICI)1098-2418(199909)15:2<113::AID-RSA1>3.0.CO;2-S},
       xxURL =
              {http://dx.doi.org/10.1002/(SICI)1098-2418(199909)15:2<113::AID-RSA1>3.0.CO;2-S},
}

\bib{DFM}{article}{
  author =       {Dyer, Martin E.},
  author = 	 {Frieze, Alan M. Frieze},
  author = 	 {McDiarmid, Colin J.H.},
  title =        {On linear programs with random costs},
  journal =      {Mathematical Programming},
  volume =       {35},
  pages =        {3--16},
  year =         {1986},
}

\bib{DK}{article}{
 author = {Dichkovskaya, S.A.},
 author = {Kravtsov, M.K.},
 title = {Investigation of polynomial algorithms for solving the 
          multicriteria three-index planar assignment problem},
 journal = {Computational Mathematics and Mathematical Physics},
 month = {jun},
 year = {2007},
 volume = {47},
 issue = {6},
 pages = {1029--1038},
}

\bib{Donath69}{article}{
 author = {Donath, W.E.},
 title = {Algorithm and Average-Value Bounds for Assignment Problems},
 journal = {{IBM} Journal of Research and Development},
 volume = {13},
 number = {4},
 pages = {380--386},
 month = {jul},
 year = {1969},
}

\bib{Frieze3d}{article}{
    AUTHOR = {Frieze, A. M.},
     TITLE = {Complexity of a {$3$}-dimensional assignment problem},
   JOURNAL = {European J. Oper. Res.},
  FJOURNAL = {European Journal of Operational Research},
    VOLUME = {13},
      YEAR = {1983},
    NUMBER = {2},
     PAGES = {161--164},
      xxISSN = {0377-2217},
     xxCODEN = {EJORDT},
   xxMRCLASS = {68C25 (90C09)},
  xxMRNUMBER = {MR708379 (84i:68064)},
       xxDOI = {10.1016/0377-2217(83)90078-4},
       xxURL = {http://dx.doi.org/10.1016/0377-2217(83)90078-4},
}

\bib{FriezeMST}{article}{
    AUTHOR = {Frieze, A. M.},
     TITLE = {On the value of a random minimum spanning tree problem},
   JOURNAL = {Discrete Appl. Math.},
  FJOURNAL = {Discrete Applied Mathematics. The Journal of Combinatorial
              Algorithms, Informatics and Computational Sciences},
    VOLUME = {10},
      YEAR = {1985},
    NUMBER = {1},
     PAGES = {47--56},
      xxISSN = {0166-218X},
     xxCODEN = {DAMADU},
   xxMRCLASS = {05C80 (05C05 60C05)},
  xxMRNUMBER = {MR770868 (86d:05103)},
xxMRREVIEWER = {Micha{\l}Karo{\'n}ski},
       xxDOI = {10.1016/0166-218X(85)90058-7},
       xxURL = {http://dx.doi.org/10.1016/0166-218X(85)90058-7},
}

\bib{GM}{report}{
 author = {Gerke, S.},
 author = {McDowell, A.},
 title = {Non-Vertex-Balanced Factors in Random Graphs},
 year = {2013},
 note = {\url{http://arxiv.org/abs/1304.3000}},
}

\bib{JKV}{article}{
    AUTHOR = {Johansson, Anders},
    author = {Kahn, Jeff},
    author = {Vu, Van},
     TITLE = {Factors in random graphs},
   JOURNAL = {Random Structures Algorithms},
  FJOURNAL = {Random Structures \& Algorithms},
    VOLUME = {33},
      YEAR = {2008},
    NUMBER = {1},
     PAGES = {1--28},
      xxISSN = {1042-9832},
   xxMRCLASS = {05C80},
  xxMRNUMBER = {MR2428975 (2009f:05243)},
xxMRREVIEWER = {Jonathan Cutler},
       xxDOI = {10.1002/rsa.20224},
       xxURL = {http://dx.doi.org/10.1002/rsa.20224},
}

\bib{Karp}{incollection}{
    AUTHOR = {Karp, Richard M.},
     TITLE = {Reducibility among combinatorial problems},
 BOOKTITLE = {Complexity of computer computations ({P}roc. {S}ympos., {IBM}
              {T}homas {J}. {W}atson {R}es. {C}enter, {Y}orktown {H}eights,
              {N}.{Y}., 1972)},
     PAGES = {85--103},
 PUBLISHER = {Plenum},
   ADDRESS = {New York},
      YEAR = {1972},
   xxMRCLASS = {68A20},
  xxMRNUMBER = {MR0378476 (51 \#14644)},
xxMRREVIEWER = {John T. Gill},
}

\bib{GKP}{article}{
 author = {Krokhmal, P.},
 author = {Grundel, D.},
 author = {Pardalos, P.},
 title = {Asymptotic behavior of the expected value of the 
          random multidimensional assignment problems},
 journal = {Mathematical Programming Series B},
 volume = {109},
 year = {2007},
 pages = {525--551},
}

\bib{Krav1}{article}{
 author = {Kravtsov, V.M.}, 
 title = {Polynomial algorithms for finding the asymptotically optimal plan 
          of the multiindex assignment problem}, 
 journal = {Cybernetics and Systems Analysis}, 
 volume = {41},
 number = {6},
 month = {nov},
 year = {2005},
 pages = {940--944},
}

\bib{Wastlund04}{article}{
    AUTHOR = {Linusson, Svante},
    author = {W{\"a}stlund, Johan},
     TITLE = {A proof of {P}arisi's conjecture on the random assignment
              problem},
   JOURNAL = {Probab. Theory Related Fields},
  FJOURNAL = {Probability Theory and Related Fields},
    VOLUME = {128},
      YEAR = {2004},
    NUMBER = {3},
     PAGES = {419--440},
      xxISSN = {0178-8051},
     xxCODEN = {PTRFEU},
   xxMRCLASS = {90C15 (15A52 60C05)},
  xxMRNUMBER = {MR2036492 (2004m:90102)},
xxMRREVIEWER = {Giovanni Andreatta},
       xxDOI = {10.1007/s00440-003-0308-9},
       xxURL = {http://dx.doi.org/10.1007/s00440-003-0308-9},
}

\bib{Mezard04}{article}{
 author = {Martin, O.C.},
 author = {M{\'e}zard, M.},
 author = {Rivoire, O.},
 title={A frozen glass phase in the multi-index matching problem},
 year={2004},
 eprint={cond-mat/0407623},
 doi={10.1103/PhysRevLett.93.217205},
} 

\bib{Mezard05}{article}{
    author = {Martin, O.C.},
    author = {M{\'e}zard, M.},
    author = {Rivoire, O.},
     TITLE = {Random multi-index matching problems},
   JOURNAL = {J. Stat. Mech. Theory Exp.},
  FJOURNAL = {Journal of Statistical Mechanics: Theory and Experiment},
      YEAR = {2005},
    NUMBER = {9},
     PAGES = {P09006, 36 pp. (electronic)},
      xxISSN = {1742-5468},
   xxMRCLASS = {90B80 (82B44 82D30 90C27)},
  xxMRNUMBER = {MR2174087 (2007c:90047)},
xxMRREVIEWER = {Menachem Dishon},
}

\bib{Mezard85}{article}{
 author = {M{\'e}zard, M.},
 author = {Parisi, G.},
 title = {Replicas and Optimization},
 journal = {J. Physique Lettres},
 volume = {46},
 month = {sep},
 pages = {771--778},
 year = {1985},
}

\bib{Mezard87}{article}{
 author = {M{\'e}zard, M.},
 author = {Parisi, G.},
 title = {On the Solution of the Random Link Matching Problems},
 journal = {J. Physique Lettres},
 volume = {48},
 month = {sep},
 pages = {1451--1459},
 year = {1987},
}

\bib{NaPrSh}{article}{
    AUTHOR = {Nair, Chandra},
    author = {Prabhakar, Balaji},
    author = {Sharma, Mayank},
     TITLE = {Proofs of the {P}arisi and {C}oppersmith-{S}orkin random
              assignment conjectures},
   JOURNAL = {Random Structures Algorithms},
  FJOURNAL = {Random Structures \& Algorithms},
    VOLUME = {27},
      YEAR = {2005},
    NUMBER = {4},
     PAGES = {413--444},
      xISSN = {1042-9832},
   xMRCLASS = {90B36 (05C70 90B50)},
  xMRNUMBER = {MR2178256 (2006e:90050)},
       DOI = {10.1002/rsa.20084},
       URL = {http://dx.doi.org/10.1002/rsa.20084},
}

\bib{Parisi98}{misc}{
 author = {Parisi, Giorgio},
 title = {A Conjecture on Random Bipartite Matching},
 mynote = {two-page note.},
 howpublished = {Physics e-Print archive, 
  http://xxx.lanl.gov/ps/cond-mat/9801176},
 month = {jan},
 year = {1998},
}

\bib{SPS}{article}{
    AUTHOR = {Schmidt-Pruzan, Jeanette},
    AUTHOR = {Shamir, Eli},
     TITLE = {Component structure in the evolution of random hypergraphs},
   JOURNAL = {Combinatorica},
  FJOURNAL = {Combinatorica. An International Journal of the J\'anos Bolyai
              Mathematical Society},
    VOLUME = {5},
      YEAR = {1985},
    NUMBER = {1},
     PAGES = {81--94},
      ISSN = {0209-9683},
     CODEN = {COMBDI},
   xMRCLASS = {05C65 (05C80)},
  xMRNUMBER = {803242 (86j:05106)},
MRREVIEWER = {J. Spencer},
       DOI = {10.1007/BF02579445},
       URL = {http://dx.doi.org/10.1007/BF02579445},
}

\bib{SS}{article}{
    AUTHOR = {Scott, Alexander D.},
    author = {Sorkin, Gregory B.},
     TITLE = {Polynomial constraint satisfaction problems, graph bisection,
              and the {I}sing partition function},
   JOURNAL = {ACM Trans. Algorithms},
  FJOURNAL = {ACM Transactions on Algorithms},
      YEAR = {2009},
    VOLUME = {5},
    NUMBER = {4},
     PAGES = {Art. 45, 1--27},
      ISSN = {1549-6325},
   xMRCLASS = {68T20 (05C70 05C85 68Q25)},
  xMRNUMBER = {2571908 (2011c:68177)},
MRREVIEWER = {Martin C. Cooper},
       DOI = {10.1145/1597036.1597049},
       URL = {http://dx.doi.org/10.1145/1597036.1597049},
}

\bib{Wastlund05}{techreport}{
 author = {W{\"a}stlund, Johan},
 title = {A simple proof of the {P}arisi and {C}oppersmith-{S}orkin formulas 
          for the random assignment problem},
  institution = {Link\"oping Studies in Mathematics},
  number = {6},
  year = {2005},
}

\bib{Wastlund09}{article}{
    AUTHOR = {W{\"a}stlund, Johan},
     TITLE = {An easy proof of the {$\zeta(2)$} limit in the random
              assignment problem},
   JOURNAL = {Electron. Commun. Probab.},
  FJOURNAL = {Electronic Communications in Probability},
    VOLUME = {14},
      YEAR = {2009},
     PAGES = {261--269},
      ISSN = {1083-589X},
   xMRCLASS = {05C80 (90C27 90C35)},
  xMRNUMBER = {2516261 (2010h:05280)},
       DOI = {10.1214/ECP.v14-1475},
       URL = {http://dx.doi.org/10.1214/ECP.v14-1475},
}

\end{biblist}
\end{bibdiv}
\end{document}